\def\T{\text}
\newcommand{\Om}{\Omega}
\newcommand{\no}[1]{\|{#1}\|}
\def\R{{\mathbb{R}}}
\def\C{{\mathbb{C}}}
\def\di{\partial}
\numberwithin{equation}{section}
\def\T{\text}
\theoremstyle{plain}
\newtheorem{theorem}{Theorem}[section]
\newtheorem{corollary}[theorem]{Corollary}
\newtheorem{proposition}[theorem]{Proposition}
\theoremstyle{definition}
\newtheorem{definition}[theorem]{Definition}
\theoremstyle{remark}
\newtheorem{remark}[theorem]{Remark}
\newcommand{\les}{\lesssim}
\def\B{\mathcal B}
\def\BB{\mathbb B}
\numberwithin{equation}{section}
\begin{document}

\title[Bergman-Toeplitz operators]{Bergman-Toeplitz operators on weakly pseudoconvex domains}

\subjclass{Primary 47G10; Secondary 32A36.}

\keywords{Bergman kernel, Berman projection, Bergman-Toeplitz operator, Schur's test, pseudoconvex domain of finite type}

\email{tkhanh@uow.edu.au \, jiakunl@uow.edu.au \, ttp754@uowmail.edu.au }
  
\author[Tran Vu Khanh, Jiakun Liu, Phung Trong Thuc]{Tran Vu Khanh, Jiakun Liu, Phung Trong Thuc}

\address
	{Institute for Mathematics and its Applications, School of Mathematics and Applied Statistics,
	University of Wollongong,
	Wollongong, NSW 2522, AUSTRALIA.}	
 \thanks{Khanh was supported by ARC grant
 	DE160100173; Liu was supported by ARC grant DP170100929; Thuc was supported by PhD scholarship in ARC grant DE140101366}	
\begin{abstract}
We prove that for certain classes of pseudoconvex domains of finite type, the Bergman-Toeplitz operator $T_{\psi}$ with symbol $\psi=K^{-\alpha}$ maps from $L^{p}$ to $L^{q}$ continuously with $1< p\le q<\infty$ if and only if $\alpha\ge\frac{1}{p}-\frac{1}{q}$,
where $K$ is the Bergman kernel on diagonal. 
This work generalises the results on strongly pseudoconvex domains by \v{C}u\v{c}kovi\'{c} and McNeal, and Abeta, Raissy and Saracco.  
\end{abstract}

\maketitle         
\section{Introduction} \label{S1}
 Let $\Omega$ be a bounded domain in $\mathbb C^n$ with the boundary $b\Om$.   A fundamental object associated to $\Om$ is  the Bergman projection $P$, that is the orthogonal projection of $L^2(\Om)$ onto the closed subspace of square-integrable holomorphic functions on $\Om$. The Bergman projection can be expressed via the integral representation 
$$Pu(z)=\int_\Om K(z,w)u(w)dw,$$
where $dw$ is the Lebesgue measure on $\Om$ and the integral kernel $K$ is called the Bergman kernel. It is well known that the Bergman projection maps from $L^p(\Om)$ to itself with $1<p<\infty$ on some classes of pseudoconvex domains of finite type such as strongly pseudoconvex domains \cite{PhSt77}, convex domains of finite type \cite{McNSt94}, pseudoconvex domains of finite type in $\C^2$ \cite{NaRoStWa89} (see also \cite{McN94b, KhRa15}).\\
 
Let $\psi\in L^\infty(\Om)$, the Bergman-Toeplitz operator with symbol $\psi$ is defined by 
$$T_\psi f (z)=P(\psi f) (z) = \int_\Om K(z,w)\psi(w)f(w) dw.$$  
The study of the Bergman-Toeplitz operators has become a central topic since it is at the interface of many important fields in algebra and analysis, e.g., $C^*$-algebra, operator theory, harmonic analysis, pseudodifferential operators, and several complex variables (see \cite{BoAl06,SSU89,Upm96} and references therein).   In this work, we focus on  the ``gain" $L^p$-estimate property of these operators by the effect of their symbols.
It is clear that $T_\psi: L^p(\Om)\to L^p(\Om)$ continuously if $P:L^p(\Om)\to L^p(\Om)$  continuously, as $\psi\in L^\infty(\Om)$. 
In order to improve the regularity of $T_\psi$ in $L^p$ spaces, one should compensate  by choosing $\psi$ such as $\psi(z)\to 0$ as  $z\to b\Om$.
Working on strongly pseudoconvex domains, \v{C}u\v{c}kovi\'{c} and McNeal \cite{CuMc06} study this gain property by choosing $\psi=\delta^\eta$ with $\eta>0$, where $\delta(\cdot)=d(\cdot,\partial\Om)$ is the Euclidean distance from the boundary.  Precisely, they prove the following result: 
\begin{theorem}[\v{C}u\v{c}kovi\'{c} and McNeal \cite{CuMc06}]\label{thm:CM} Let $\Omega$ be a smooth, bounded, strongly pseudoconvex domain in $\C^n$ and let $\eta\ge 0$. 
	\begin{enumerate}
		\item If $0\le \eta < n+1$ and $1<p<\infty$, then 
		\begin{enumerate}
			\item[(i)] if  $\frac{n+1}{n+1-\eta}< \frac{p}{p-1}$, then $T_{\delta^\eta}: L^p(\Om)\to L^q(\Om)$ continuously, where $\frac{1}{q}=\frac{1}{p}-\frac{\eta}{n+1}$;
			\item[(ii)] if $\frac{n+1}{n+1-\eta}\ge \frac{p}{p-1}$, then $T_{\delta^\eta}: L^p(\Om)\to L^q(\Om)$ continuously, for all $p\le q<\infty$.
	\end{enumerate}			
		\item If $\eta\ge n+1$, then $T_{\delta^\eta}: L^1(\Om)\to L^\infty(\Om)$ continuously.
		\end{enumerate}
	\end{theorem}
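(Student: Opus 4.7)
The plan is to apply a Schur-type integrability test to the positive operator with kernel $|K(z,w)|\delta(w)^\eta$, using weight functions of the form $h(z)=\delta(z)^{-s}$. The key analytic input is the Fefferman/Phong--Stein pointwise estimate for the Bergman kernel on a strongly pseudoconvex domain,
$$|K(z,w)| \lesssim \frac{1}{\bigl(\delta(z)+\delta(w)+|\Phi(z,w)|\bigr)^{n+1}},$$
where $\Phi$ is an appropriate polarisation of a defining function, from which one derives the Forelli--Rudin-type bound
$$\int_\Omega |K(z,w)|\,\delta(w)^t\,dw \lesssim \begin{cases} 1 & \text{if } t>0,\\ 1+|\log\delta(z)| & \text{if } t=0,\\ \delta(z)^t & \text{if }-1<t<0, \end{cases}$$
together with its $z$-symmetric counterpart.

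For part (1)(i), where $\eta<(n+1)/p$ and $1/q=1/p-\eta/(n+1)$, I would verify the two Schur integrability conditions for the positive kernel $|K(z,w)|\delta(w)^\eta$ with the weight $h(z)=\delta(z)^{-s}$. Each condition reduces to an integral of the form $\int |K(z,w)|\delta(w)^{\eta-sp'}\,dw$, or its $z$-dual with exponent $-sq$, which is controlled by the Forelli--Rudin estimate above. A direct exponent count shows that the two conditions are simultaneously consistent precisely when $1/q-1/p=-\eta/(n+1)$ and $\eta<(n+1)/p$, which recovers the index relation in (1)(i).

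Parts (1)(ii) and (2) reduce to case (1)(i) via the boundedness of $\delta$ on $\Omega$. For (1)(ii), given $p\le q<\infty$, set $\tilde\eta=(n+1)(1/p-1/q)$, so that $\tilde\eta<(n+1)/p$ and $\tilde\eta\le\eta$; writing $\delta^\eta=\delta^{\tilde\eta}\cdot\delta^{\eta-\tilde\eta}$ with $\delta^{\eta-\tilde\eta}\in L^\infty(\Omega)$ yields $T_{\delta^\eta}f=T_{\delta^{\tilde\eta}}(\delta^{\eta-\tilde\eta}f)$, which maps $L^p$ to $L^q$ by case (1)(i) applied with parameter $\tilde\eta$. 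For (2), the pointwise bound above gives $|K(z,w)|\delta(w)^{n+1}\lesssim 1$ uniformly on $\Omega\times\Omega$, so for $\eta\ge n+1$ the kernel $|K(z,w)|\delta(w)^\eta$ is bounded and $T_{\delta^\eta}\colon L^1(\Omega)\to L^\infty(\Omega)$ is immediate.

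The main technical hurdle will be the exponent bookkeeping in part (1)(i): the weight $h=\delta^{-s}$ must be chosen so that both $\eta-sp'>-1$ and $-sq>-1$ hold and so that the Forelli--Rudin output of each integral matches the required power of $\delta(z)$ on the right-hand side. The borderline logarithmic case of Forelli--Rudin is exactly what separates (1)(i) from (1)(ii), and the transition at $\eta=n+1$ separates case (1) from case (2).
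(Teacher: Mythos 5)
Your overall strategy --- Fefferman's kernel bound plus a Schur-type test with power weights $\delta^{-s}$ and Forelli--Rudin integral estimates --- is indeed the route taken in \cite{CuMc06}, and it is also the abstract mechanism of this paper (Theorem~\ref{thm:Schur's test} combined with Proposition~\ref{prop:main}, specialised to the strongly pseudoconvex case via $K(z,z)\approx\delta(z)^{-(n+1)}$, which is how Theorem~\ref{thm:CM} is recovered from Theorem~\ref{thm:main S1}). However, the central step of your part (1)(i) has a genuine gap: both Schur conditions you write down involve only the \emph{first} power of $|K(z,w)|$, and the only integral input you quote is the power-one Forelli--Rudin bound. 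No Schur-type theorem gives $L^p\to L^q$ boundedness with $q>p$ from two such power-one conditions; if one did, then already for $\eta=0$ and $s>0$ small both conditions hold (the first by the $-1<t<0$ case, the second by its $z$-symmetric counterpart), and you would conclude that the Bergman projection maps $L^p$ into some $L^q$ with $q>p$, which is false and contradicts the sharpness direction (Theorem~\ref{thm:ARS}, or Theorem~\ref{thm:main2}). The gain must be built into the test through kernel powers tied to $p$ and $q$: in the version used here (Theorem~\ref{thm:Schur's test}) the kernel enters with exponents $\alpha p'$ and $(1-\alpha)q$, where $p'=p/(p-1)$, and the integral estimate has to be proved for $|K|^{a}$ with $a\ge 1$, in which case the right-hand side carries the extra factor $K(z,z)^{a-1}\approx\delta(z)^{-(n+1)(a-1)}$ (Proposition~\ref{prop:main}). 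Taking $a=q/p$ in the second condition, it is exactly this factor, matched against the symbol $\delta(w)^{\eta}$ with $\eta=(n+1)\left(\frac1p-\frac1q\right)$, that produces the index relation in (1)(i), while the admissibility constraints $\eta-sp'>-1$ and $sq<1$ encode the hypothesis $\frac{n+1}{n+1-\eta}<\frac{p}{p-1}$. So you need to (i) state the off-diagonal Schur test with kernel powers and (ii) extend your Forelli--Rudin lemma to $|K|^{a}$, $a\ge1$; both follow from the same Fefferman bound, but without them the ``direct exponent count'' you invoke cannot be carried out.

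Your reductions for (1)(ii) and (2) are correct and mirror what the comparison \eqref{eqn:compare r and K} accomplishes here: monotonicity in $\eta$ via boundedness of $\delta$, and the uniform bound $|K(z,w)|\,\delta(w)^{n+1}\lesssim 1$ for the $L^1\to L^\infty$ endpoint. One caveat: Fefferman's asymptotics are local near the boundary diagonal, so to get that uniform bound on all of $\Omega\times\Omega$ (and to run the Schur argument globally) you must also use the boundedness of $K$ on $\overline\Omega\times\overline\Omega$ away from the boundary diagonal (Kerzman/Boas), which is precisely how the proof of Theorem~\ref{thm:main1} localises with a partition of unity before applying the local estimates.
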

	
Later on, Abate, Raissy and Saracco \cite{ARS12} show that the gain exponents  in Part (1.i) of Theorem~\ref{thm:CM} are also optimal by using geometric characterisation of Carleson measures in term of the intrinsic Kobayashi geometry. In fact, they prove:
\begin{theorem}[Abate, Raissy and Saracco \cite{ARS12}]\label{thm:ARS}
	Let $\Om$ be a smooth, bounded, strongly pseudoconvex domain in $\C^n$ and let $1<p<q<\infty$ and $\eta\ge0$. Then $T_{\delta^{\eta}}:L^p(\Om)\to L^q(\Om)$ continuously if and only if $\frac{\eta}{n+1}\ge\frac{1}{p}-\frac{1}{q}$.
\end{theorem}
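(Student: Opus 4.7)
I would split the biconditional into its two directions. The \emph{if} direction reduces cleanly to Theorem~\ref{thm:CM}, while the \emph{only if} direction---the new content---is obtained by testing the operator on a one-parameter family of functions concentrated near a boundary point and reading off the optimal exponent relation by scaling.

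\emph{Sufficiency.} Set $\eta_0:=(n+1)(1/p-1/q)$. Since $1<p<q<\infty$, one has $\eta_0\in(0,(n+1)/p)$, and the condition $\frac{n+1}{n+1-\eta_0}<\frac{p}{p-1}$ holds, so Theorem~\ref{thm:CM}(1.i) gives $T_{\delta^{\eta_0}}:L^p(\Om)\to L^q(\Om)$ continuously. For any $\eta\ge\eta_0$, the factorisation $T_{\delta^\eta}f=T_{\delta^{\eta_0}}(\delta^{\eta-\eta_0}f)$ together with the boundedness of $\delta$ on the bounded domain $\Om$ immediately yields $\no{T_{\delta^\eta}f}_{L^q}\lesssim\no{f}_{L^p}$.

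\emph{Necessity.} I would test the assumed bound $\no{T_{\delta^\eta}f}_{L^q}\lesssim\no{f}_{L^p}$ on $f=\chi_B$, where $B=B_K(a,r)$ is the Kobayashi ball of a fixed small radius $r$ centred at $a\in\Om$ with $\delta(a)=t$ small. On a smooth bounded strongly pseudoconvex domain the following Bergman kernel asymptotics are classical: $K(a,a)\sim t^{-(n+1)}$, $|B|\sim t^{n+1}$, $\delta(w)\sim t$ and $K(z,w)\sim K(z,a)$ uniformly for $w\in B$ (in both modulus and argument, provided $r$ is chosen sufficiently small), and $\int_\Om|K(z,a)|^q\,dz\sim t^{-(n+1)(q-1)}$. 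Combining these,
$$|T_{\delta^\eta}\chi_B(z)|\gtrsim t^{\eta+(n+1)}\,|K(z,a)|\quad\Longrightarrow\quad\no{T_{\delta^\eta}\chi_B}_{L^q}\gtrsim t^{\eta+(n+1)/q},$$
whereas $\no{\chi_B}_{L^p}\sim t^{(n+1)/p}$. The continuity hypothesis then forces $t^{\eta+(n+1)/q}\lesssim t^{(n+1)/p}$ uniformly in small $t$, which upon letting $t\to 0^+$ yields $\eta/(n+1)\ge 1/p-1/q$.

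\emph{Main obstacle.} The delicate step is the non-cancellation estimate $K(z,w)\sim K(z,a)$ uniformly for $w\in B$: without control on the argument of $K(z,w)$, the integral defining $T_{\delta^\eta}\chi_B(z)$ could lose the lower bound through cancellation. On strongly pseudoconvex domains this is handled by Fefferman's asymptotic expansion of the Bergman kernel, together with the comparability of the Kobayashi metric with the McNeal--Koranyi pseudodistance; shrinking $r$ makes the phase variation of $K(z,\cdot)$ on $B$ uniformly small. A slightly cleaner alternative is to test with a suitable power $|K(\cdot,a)|^\beta$ of the Bergman kernel in place of $\chi_B$ and invoke the standard integral estimates for powers of $K$, which bypasses the phase issue but relies on the same underlying kernel asymptotics.
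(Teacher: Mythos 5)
Your sufficiency argument is fine: with $\eta_0=(n+1)(\tfrac1p-\tfrac1q)$ one has $0<\eta_0<\tfrac{n+1}{p}<n+1$ and $\tfrac{n+1}{n+1-\eta_0}<\tfrac{p}{p-1}$, so Theorem~\ref{thm:CM}(1.i) applies, and the factorisation $T_{\delta^{\eta}}f=T_{\delta^{\eta_0}}(\delta^{\eta-\eta_0}f)$ with $\delta$ bounded handles $\eta\ge\eta_0$. Note, though, that this is not the route of the present paper: here the statement is quoted from \cite{ARS12} and re-derived as case (a) of Theorem~\ref{thm:main S1}, i.e.\ sufficiency comes from the sharp $\B$-type estimates plus the generalised Schur test (Theorem~\ref{thm:main1}), and necessity from the duality/reproducing-property argument of Theorem~\ref{thm:main2}, with the hypotheses checked via Fefferman's expansion (Theorem~\ref{thm:main3}) and the comparison \eqref{eqn:compare r and K}.

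The gap is in your necessity step. The claim that $K(z,w)\sim K(z,a)$, in modulus \emph{and} argument, uniformly for all $z\in\Om$ and $w$ in a small Kobayashi ball $B$ around $a$ is false as stated: off the diagonal the Bergman kernel of a strongly pseudoconvex domain has no positive lower bound and may even vanish (the Lu Qi-Keng problem), and Fefferman's expansion controls $K(z,w)$ only when $z$ and $w$ are close in the boundary quasi-distance. So the pointwise bound $|T_{\delta^\eta}\chi_B(z)|\gtrsim t^{\eta+n+1}|K(z,a)|$ can legitimately be asserted only for $z$ in a fixed quasi-metric neighbourhood of $a$ where the main term of the expansion dominates the error term, and you must then verify that this region already carries a definite fraction of $\int_\Om|K(z,a)|^q\,dz\sim t^{-(n+1)(q-1)}$ (true, but this localisation and the control of Fefferman's error term are precisely the missing technical content; the same remark applies to the lower bound of that integral itself). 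Moreover, your proposed ``cleaner alternative'' of testing with $|K(\cdot,a)|^{\beta}$ does not bypass the cancellation problem, since the oscillating factor $K(z,w)$ still sits inside the integral defining $T_{\delta^\eta}$. What does bypass it---and is exactly the paper's argument in Theorem~\ref{thm:main2}---is to test with the holomorphic function $K(\cdot,a)$ and pair $T_{\delta^\eta}K(\cdot,a)$ against $K(a,\cdot)$: Fubini and the reproducing property turn the pairing into $\int_\Om|K(w,a)|^2\delta^\eta(w)\,dw\ge0$, which is bounded below using the sub-mean-value inequality on a polydisc of volume $\approx K(a,a)^{-1}$, while the upper bound follows from H\"older and Proposition~\ref{prop:main}; no phase control is ever needed. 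As written, then, your ``only if'' half has a genuine gap, although the intended localisation via Fefferman's expansion (or, more simply, the duality argument just described) can repair it.
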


A crucial ingredient in the proof of Theorem~\ref{thm:CM} is the precise information on the Bergman kernel established by Fefferman \cite{Fef74} on strongly pseudoconvex domains. Although, the authors in \cite{CuMc06} commented {\it``our proof of Theorem~\ref{thm:CM} goes through, with minimal changes, on other classes of domains where good estimates on the Bergman kernel are known, e.g., finite type domains in $\C^2$, convex domains of finite type in $\C^n$''}, we observe that for weakly pseudoconvex domains, the ``good estimates'' on Bergman kernels (if established) depend on the multi-type of boundary points, then the symbol should also depend on this multi-type (the multi-type is uniform on strongly pseudocovex domains). Thus the symbol $\delta^\eta$ with a fixed $\eta$ is not a suitable candidate for the study of gain $L^p$ estimates of Toeplitz operators on weakly pseudoconvex domains. For this reason, we shall work on the symbol $K^{-\alpha}(z,z)$ for some constant $\alpha>0$ instead of the symbol $\delta^\eta(z)$, and then generalise \v{C}u\v{c}kovi\'{c} and McNeal's result in \cite{CuMc06} and Abate, Raissy and Saracco's result in \cite{ARS12} for a large class of pseudocovex domains of finite type (see Theorem \ref{thm:main S1}). 
In particular, Bergman kernels in this class have good estimates, called ``sharp $\B$-type".  Additionally, we also provide an upper-bound for the norm $\no{T_\psi}_{L^p(\Om)\to L^q(\Om)}$. The upper-bound for the norm of Bergman projection has been given by  Zhu \cite{Zhu06} on the unit ball $\mathbb{B}$ in $\C^n$, that is 
$$\no{P}_{L^p(\mathbb{B})\to L^p(\mathbb{B})}\le C\frac{p^2}{p-1}, \quad \T{for }1<p<\infty.$$ 
Recently, \v{C}u\v{c}kovi\'{c}  \cite{Cuc17} obtains this upper-bound for strongly pseudoconvex domains for a different constant $C$. \\

Our main result of this paper is:
 \begin{theorem}\label{thm:main S1}
 	Let $\Om$ be a bounded, pseudoconvex domain in $\C^n$ with smooth boundary. Assume that $\Om$ satisfies one of the following conditions:
 	\begin{enumerate}
 		\item[(a)] $\Om$ is a strongly pseudoconvex domain;
 		\item[(b)] $\Om$ is a pseudoconvex domain of finite type and $n=2$;
 		\item[(c)] $\Om$ is a convex domain of finite type;
 		\item[(d)] $\Om$ is a decoupled domain of finite type;
 		\item[(e)] $\Om$ is a pseudoconvex domain of finite type whose Levi-form has only one degenerate eigenvalue or comparable eigenvalues.  
 	\end{enumerate} 
 	Then we have the following conclusions:
 	\begin{enumerate}
 		\item The Bergman-Toeplitz operator $T_\psi$ with symbol $\psi(z)= K^{-\alpha}(z,z) $ maps from $L^p(\Om)$ to $L^q(\Om)$ continuously with $1<p\le q<\infty$ if and only if $\alpha\ge \frac{1}{p}-\frac{1}{q}$. Additional, if $T_\psi:L^p(\Om)\to L^q(\Om)$ continuously, then 
 			$$\no{T_\psi}_{L^p(\Om)\to L^q(\Om)}\le C_\Om \left(\frac{p}{p-1}+q \right)^{1-\frac{1}{p}+\frac{1}{q}},$$
 			where the constant $C_{\Om}$ depends only on the domain $\Om$.
 		\item The Bergman-Toeplitz operator $T_\psi$ with symbol $\psi(z)=K^{-1}(z,z)$ maps from $L^1(\Om)$ to $L^\infty(\Om)$ continuously.
 	\end{enumerate} 
 \end{theorem}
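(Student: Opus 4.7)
The plan has three main ingredients: (i) sharp pointwise and integral estimates on the Bergman kernel for the classes of domains in (a)--(e), which are to be established in preceding sections; (ii) a two-weight Schur test for the sufficiency direction of Part (1); (iii) an explicit family of test functions for the necessity direction.

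For Part (2), I would use the pointwise estimate $|K(z,w)| \lesssim K(w,w)$ (uniform on the classes (a)--(e), and a direct consequence of the sharp ``$\B$-type'' kernel estimates). This gives $|K(z,w)|\, K(w,w)^{-1} \le C$ pointwise, and therefore
\[
|T_\psi f(z)| \le \int_\Om |K(z,w)|\, K(w,w)^{-1} |f(w)|\, dw \le C \|f\|_{L^1(\Om)},
\]
proving the $L^1 \to L^\infty$ bound. The symmetric estimate $|K(z,w)| \lesssim K(z,z)$ follows by the hermitian symmetry of $K$.

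For Part (1), sufficiency reduces to the boundary case $\alpha = 1/p - 1/q$: for larger $\alpha$ one factors $T_{K^{-\alpha}} = T_{K^{-(1/p-1/q)}} \circ M_{K^{-(\alpha-1/p+1/q)}}$, and multiplication by the bounded symbol $K^{-(\alpha-1/p+1/q)}$ is continuous on every $L^p$. For the boundary case I would apply a two-weight Schur test to the positive kernel $|K(z,w)|\, K(w,w)^{-\alpha}$, with test functions $h_1(w) = K(w,w)^{-a}$ and $h_2(z) = K(z,z)^{-b}$. The exponents $a,b$ are chosen so that the two Schur integrals reduce to instances of the sharp integral estimate
\[
\int_\Om |K(z,w)|\, K(w,w)^{-s}\, dw \;\lesssim\; K(z,z)^{1-s}, \qquad 0 \le s \le 1,
\]
and its symmetric analogue. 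The norm bound $C_\Om(p/(p-1) + q)^{1-1/p+1/q}$ comes from tracking how the Schur constants depend on $p,q$ through the choice of $a,b$; the factor $p/(p-1)$ mirrors the known behaviour of $\|P\|_{L^p \to L^p}$ as $p \to 1$, and the factor $q$ its dual as $q \to \infty$.

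For the necessity direction, I would test $T_\psi$ on the family
\[
f_\zeta(w) = \frac{K(w,w)^{\alpha}\, K(w,\zeta)}{K(\zeta,\zeta)^{1/2}}, \qquad \zeta \in \Om,
\]
designed so that $K(w,w)^{-\alpha} f_\zeta(w) = K(w,\zeta)/K(\zeta,\zeta)^{1/2}$ is the $A^2$-normalised reproducing kernel at $\zeta$. The reproducing property then gives $T_\psi f_\zeta(z) = K(z,\zeta)/K(\zeta,\zeta)^{1/2}$. Computing $\|f_\zeta\|_{L^p}$ and $\|T_\psi f_\zeta\|_{L^q}$ via the same Forelli--Rudin type integral estimates yields
\[
\frac{\|T_\psi f_\zeta\|_{L^q(\Om)}}{\|f_\zeta\|_{L^p(\Om)}} \;\sim\; K(\zeta,\zeta)^{1/p - 1/q - \alpha},
\]
and taking $\zeta \to b\Om$ so that $K(\zeta,\zeta) \to \infty$ forces $\alpha \ge 1/p - 1/q$.

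The main obstacle I expect is twofold. First, assembling the sharp pointwise and integral kernel estimates uniformly across the heterogeneous classes (a)--(e): this will require compiling results of Fefferman for (a), Nagel--Rosay--Stein--Wainger for (b), and McNeal and collaborators for (c)--(e) into a single ``sharp $\B$-type'' framework, with enough flexibility in the exponent $s$ to cover both the pointwise endpoint and the full Schur range. Second, the bookkeeping in the Schur test needed to extract the sharp norm constant is delicate and requires the weights $h_1, h_2$ to be chosen optimally in terms of $p, q, \alpha$.
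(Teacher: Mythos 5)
Your Part (2) argument and the reduction of Part (1) to the critical exponent $\alpha=\frac1p-\frac1q$ by factoring out the bounded multiplier agree with the paper, but both halves of your Part (1) plan have genuine gaps. For sufficiency: a two-weight Schur test for $L^p\to L^q$ with $p<q$ forces you to split the kernel as $|K|^{\theta}\cdot|K|^{1-\theta}$ and to control \emph{both} $\int_\Om |K(z,w)|^{\theta p'}h_1(w)^{p'}dw$ and $\int_\Om |K(z,w)|^{(1-\theta)q}g(z)^{q}dz$; taking $\theta p'=1$ makes $(1-\theta)q=q/p>1$, so integral estimates for powers $|K(z,w)|^{a}$ with $a>1$ are unavoidable, and your stated estimate $\int_\Om |K(z,w)|K(w,w)^{-s}dw\les K(z,z)^{1-s}$ (first power only) does not supply them. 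Moreover your weights are powers of the diagonal kernel $K(w,w)$, which on the classes (b)--(e) is an anisotropic quantity $\approx\prod_j b_j^2(w,w)$ rather than a power of the distance; integrating such a weight against $|K(z,w)|^{a}$ requires comparing the $\B$-system at $w$ with the one at $z$ (doubling/engulfing of the polydiscs), which is exactly the technical work your proposal leaves unproved and which is not an off-the-shelf Forelli--Rudin statement on these domains. The paper avoids this by using weights $|r(w)|^{-\beta}$, proving $\int_{\Om\cap U}|K(z,w)|^{a}|r(w)|^{b}dw\les K(z,z)^{a-1}|r(z)|^{b}$ for all $a\ge1$, $-1<b<2a-2$ by a polar-coordinate computation in the $\B$-system centered at $z$; a diagonal-kernel power appears only on the right-hand side of the second Schur inequality, never as a weight inside an integral, a partition of unity handles the fact that the kernel estimates are only local, and the explicit choice $\beta=1/(p'+q)$ yields the constant $\left(\frac{p}{p-1}+q\right)^{1-\frac1p+\frac1q}$.

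The necessity direction is where the proposal actually fails. Your test functions $f_\zeta=K(\cdot,\cdot)^{\alpha}K(\cdot,\zeta)/K(\zeta,\zeta)^{1/2}$ carry the factor $K(w,w)^{\alpha}$, which blows up at the boundary, so $f_\zeta$ need not be in $L^p$: already on the unit ball (case (a)), $\|f_\zeta\|_{L^p}=\infty$ whenever $\alpha p(n+1)\ge1$, and the range $\frac{1}{p(n+1)}\le\alpha<\frac1p-\frac1q$ is nonempty as soon as $q>p(n+1)/n$. Monotonicity of boundedness in $\alpha$ goes the wrong way to repair this, so your argument misses precisely the critical values of $\alpha$ just below $\frac1p-\frac1q$. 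In addition, even where $f_\zeta\in L^p$, the bound for $\|f_\zeta\|_{L^p}$ is again an integral weighted by a \emph{positive} power of $K(w,w)$, and the lower bound $\|K(\cdot,\zeta)\|_{L^q}^q\ges K(\zeta,\zeta)^{q-1}$ is not a free Forelli--Rudin fact on finite-type domains: it needs the extra geometric hypothesis (a $\B$-polydisc $P_\lambda(\zeta')\subset\Om'$ with $K(w',w')\les K(\zeta',\zeta')$ on it and $\T{Vol}(P_\lambda(\zeta'))\approx K(\zeta',\zeta')^{-1}$, combined with the sub-mean-value property of $|K(\cdot,\zeta')|^2$), which your plan never invokes. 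The paper's proof sidesteps both problems by testing on $K(\cdot,z)$ itself: the reproducing identity gives $\int_\Om |K(w,z)|^2K(w,w)^{-\alpha}dw=\int_\Om \big(T_{K^{-\alpha}}K(\cdot,z)\big)(\xi)K(z,\xi)d\xi\le \no{T}\,\no{K(\cdot,z)}_{L^p}\no{K(\cdot,z)}_{L^{q'}}\les\no{T}\,K(z,z)^{1-\frac1p+\frac1q}$, while the polydisc argument bounds the same quantity below by $cK(z,z)^{1-\alpha}$; letting $z\to b\Om$ forces $\alpha\ge\frac1p-\frac1q$ for every $\alpha$ simultaneously. You would need either to adopt that duality argument or to substantially modify your test family (and prove the weighted integral estimates it requires) before the necessity claim is established.
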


When $\Om$ is strongly pseudoconvex, from the asymptotic estimates of the Bergman kernel by Fefferman \cite{Fef74},
one has
 \begin{equation}\label{eqn:compare r and K}
 C_1|r(z)|^{-(n+1)}\le K(z,z)\le C_2 |r(z)|^{-(n+1)}\quad \T{ for all $z\in \Om$,}
 \end{equation}
 where $r$ is a defining function of $\Om$ satisfying $c_1\delta(z)\leq |r(z)| \leq c_2\delta(z)$, $\forall z\in\Om$, for two constants $c_1, c_2$. 
Using \eqref{eqn:compare r and K}, one can see that for strongly pseudoconvex domains, the conclusions for case $(a)$ in Theorem~\ref{thm:main S1} are equivalent to those in Theorems \ref{thm:CM} and \ref{thm:ARS}.\\

The proof of Theorem~~\ref{thm:main S1}  is a consequence of the results of Theorems~\ref{thm:main1}, \ref{thm:main2} and \ref{thm:main3} below. In Theorems~\ref{thm:main1} and \ref{thm:main2}, we give the statement in an abstract setting of domains. 
Our work may extend to other class of pseudoconvex domains, e.g., the $h$-extendible domains \cite{Yu94}. 
However, to the best of our knowledge, there are only domains listed in Theorem~~\ref{thm:main S1} whose Bergman kernels have the desired ``good estimates".  
In fact, we prove in  Theorem~\ref{thm:main1} that if the Bergman kernel satisfies the good estimate (named {\it sharp $\B$-type}), then the Bergman-Toeplitz operator $T_{\psi}$ with a symbol $\psi(z)\le  K^{\frac{1}{q}-\frac{1}{p}}(z,z) $ (almost everywhere)  for $1<p\le q<\infty$, maps from $L^p(\Om)$ to  $L^q(\Om)$ continuously.  
By an additional geometric hypothesis, in Theorem~\ref{thm:main2} we prove conversely that if $T_{\psi}:L^p(\Om)\to L^q(\Om)$ continuously with $\psi(z)=  K^{-\alpha}(z,z) $, then $\alpha \geq \frac{1}{p}-\frac{1}{q}$. 
Finally, we verify that all domains listed in Theorem~\ref{thm:main S1} satisfy the hypotheses of both Theorems~\ref{thm:main1} and \ref{thm:main2} by using the work of Fefferman \cite{Fef74}, Catlin \cite{Cat89}, McNeal \cite{McN94, McN91}, McNeal and Stein \cite{McNSt94}, and Cho \cite{Cho96,Cho02}.\\

 {\bf Notations:} Throughout this paper, we use letters $C$ and $\tilde C$ to denote universal positive constants that only depend on the domain $\Om$ (e.g. $n$ and the type of $\Om$) and $\no{\psi}_{L^\infty(\Om)}$, but may change from place to place.
 We also denote by $r_\Om(z)$ and $K_{\Om}$ the negative distance function and  the Bergman kernel associated to $\Om$, respectively.

\vspace{10pt}

\section{$L^p\T-L^q$ boundedness of Bergman-Toeplitz operators} \label{S2}
In this section, we  introduce a concept ``sharp $\B$-type''. 
Heuristically, if a Bergman kernel is of this type,  then it has good estimates that ensure the Bergman projection is self bounded in $L^p$. 
This sharp $\B$-type condition unifies all good estimates established by many authors on strongly pseudoconvex domains, pseudoconvex domains of finite type in $\C^2$, convex domains of finite type, and etc. \\

Let $\Omega'$ be a bounded domain in $\C^n$.  For $z'\in \overline{\Om'}$ near the boundary $b\Om'$, a family of functions $\B=\{b_j(z',\cdot)\}_{j=1}^n$  is called a \emph{$\B$-system} at $z'$, if there exists a neighbourhood $U$ of $z'$, a positive integer $m\geq2$ such that
$\forall$ $w'\in U$, 
  	$$
  	b_{1}(z',w')  := \frac{1}{\delta(z',w')} \quad \T{and} \quad 
  	b_{j}(z',w')   :=  \sum_{k=2}^{m}\left(\dfrac{A_{jk}\left(z'\right)}{\delta(z',w')}\right)^{\frac{1}{k}}, \quad \T{for } j=2,{\scriptstyle\ldots},n,$$
  	where $\left\{ A_{jk}:U\rightarrow\mathbb{R}^{\geq0}\right\}$ are bounded functions, and $\delta(z',w')$ is the pseudo-distance between $z'$ and $w'$, given by
  	\begin{eqnarray}
  	\label{eqn:delta0}
  	\delta(z',w') = \left|r_{\Om'}\left(z'\right)\right|+\left|r_{\Om'}\left(w'\right)\right|+\left|z'_{1}-w'_{1}\right|+\sum_{l=2}^{n}\sum_{s=2}^{m}A_{ls}\left(z'\right)\left|z'_{l}-w'_{l}\right|^{s},
  	\end{eqnarray}
  	under a proper system of coordinates, see \cite{McNSt94}.\\
	
Let us start with the definition of  sharp $\B$-type at a point near the boundary.	
  \begin{definition}\label{def:local B type}
  		The Bergman kernel $K_{\Om'}$  is said to be of {\it  $\B$-type at $z'\in \overline{\Om'}$ near the boundary $b\Om'$} if there exists a constant $c>0$ such that  $\forall$  $w'\in \Omega'\cap \BB(z',c)$,
  		$$\left|K_{\Om'}(z',w')\right|\le C\prod_{j=1}^{n}b^{2}_{j}(z',w'). $$
  		We also say that $K_{\Om'}$ is of {\it sharp $\B$-type at $z'\in \overline{\Om'}$ near the boundary $b\Om'$} if $K_{\Om'}$ is of $\B$-type and has the sharp lower-bound on diagonal, i.e.,
  		$$ \tilde{C} \prod_{j=1}^n b^2_j(z',z')\le K_{\Om'}(z',z')\le C \prod_{j=1}^{n}b^2_{j}(z',z').$$
  \end{definition}

Next we give the definition of (global) sharp $\B$-type on a domain $\Om$.
\begin{definition}\label{def:global B type}
	We  say that a kernel $K_\Om$ associated to a domain $\Om$ is of  sharp $\B$-type if
	\begin{enumerate}
		\item[(i)]  $K_\Om\in C(\overline\Om\times\overline\Om\setminus{\Delta_b})$ where $\Delta_b=\{(z,z):z\in b\Om\}$; and 
		\item[(ii)] $\forall$ $z\in \overline\Om$ near the boundary $b\Om$, there exists a biholomorphism $\Phi_z$ whose holomorphic Jacobian is uniformly nonsingular so that the Bergman kernel $K_{\Om'}$ associated to the domain $\Om':=\Phi_z(\Om)$ is of sharp $\B$-type at $z':=\Phi_z(z)$.
		\end{enumerate}
\end{definition} 
 
 \begin{remark}
 	The $\B$-type kernel condition is originally introduced by McNeal and Stein \cite{McNSt94}. In \cite{McNSt94}, they prove that if the Bergman kernel associated to a convex domain of finite type is of $\B$-type, then the Bergman projection is $L^p$ self-bounded for $1<p<\infty$.
 \end{remark}

The sharp $\B$-type implies the following integral estimate, which will be used subsequently. 

\begin{proposition}\label{prop:main} Let $\Om$ be a domain in $\C^n$.
Assume that the Bergman kernel $K_{\Om}$ is of sharp $\mathcal{B}$-type. 
Then, for each $z_0\in b\Om$, there is a neighbourhood $U$ of $z_0$ such that  for any $a\ge 1$ and $-1<b<2a-2$, 
	\begin{equation}
	\label{eqn:Iab}
	\begin{split}
	I_{a,b}\left(z\right) &:= \intop_{\Omega\cap U}\left|K_{\Om}\left(z,w\right)\right|^{a}\left|r_{\Om}\left(w\right)\right|^{b}dw \\
		&\leq C\frac{2a-1}{(2a-2-b)(b+1)}K_{\Om}(z,z)^{a-1}\left|r_{\Om}\left(z\right)\right|^{b}
	\end{split}
	\end{equation}
	for every $z\in\Omega\cap U.$
\end{proposition}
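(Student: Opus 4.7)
The plan is to use the biholomorphism built into Definition \ref{def:global B type} to reduce to a model in which the pseudo-distance $\delta(z,w)$ has the explicit form \eqref{eqn:delta0}, apply the sharp $\B$-type upper bound $|K_\Om(z,w)|^a \le C \prod_{j=1}^n b_j(z,w)^{2a}$, and then compute the resulting integral by an iterated Fubini, integrating in the tangential directions $w_2,\ldots,w_n$ before the normal direction $w_1$. The factor $K_\Om(z,z)^{a-1}$ will appear at the very end by matching the accumulated $\prod_j b_j(z,z)^2$ against the sharp $\B$-type lower bound on the diagonal.

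I would choose local coordinates centered at $z$ so that $w_1 = t + i\eta$ with $t \sim -r_\Om(w)$ and $w' = (w_2,\ldots,w_n)$ the tangential variables; then $dw \sim dt\, d\eta\, dw'$ and \eqref{eqn:delta0} becomes
\[
\delta(z,w) \sim |r_\Om(z)| + t + |\eta| + \sum_{l=2}^n \sum_{s=2}^m A_{ls}(z)\,|w_l|^s.
\]
Because each $b_j(z,w)$ is comparable to its largest summand, expanding $\prod_j b_j(z,w)^{2a}$ produces at most $m^{n-1}$ monomials of the form $\delta^{-2a}\prod_{l\ge 2}(A_{lk_l}(z)/\delta)^{2a/k_l}$, and it suffices to prove the claim for one fixed choice of exponents $(k_2,\ldots,k_n)$. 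I would then integrate $w_2,\ldots,w_n$ one at a time via the standard estimate
\[
\int_\C (\xi + A|u|^k)^{-\beta}\,du \,\lesssim\, \frac{\xi^{-\beta+2/k}}{A^{2/k}}, \qquad \beta k > 2,
\]
after bounding $\delta$ below by $\xi + A_{lk_l}(z)|w_l|^{k_l}$ (with $\xi$ collecting everything else). Each integration lowers the $\delta$-exponent by $2/k_l$, and after all $n-1$ tangential integrations the residual exponent equals exactly $2a$, by the arithmetic identity $2a\bigl(1 + \sum_{j\ge 2} 1/k_j\bigr) - 2\sum_{j\ge 2} 1/k_j = 2a$.

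The remaining normal integral is
\[
\iint_{t>0,\, \eta\in\R} \frac{t^b\,dt\,d\eta}{(|r_\Om(z)|+t+|\eta|)^{2a}},
\]
which I would evaluate by integrating $\eta$ first, producing $2(|r_\Om(z)|+t)^{1-2a}/(2a-1)$, and then $t$, yielding $|r_\Om(z)|^{b+2-2a}\,B(b+1,2a-2-b)$. The elementary bound $B(\alpha,\beta)\le 1/\alpha + 1/\beta$ gives $B(b+1,2a-2-b)\le (2a-1)/[(b+1)(2a-2-b)]$ since $(b+1)+(2a-2-b)=2a-1$, which is exactly where the denominators in the proposition appear. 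Combining the $A_{lk_l}(z)^{2(a-1)/k_l}$ factors acquired in the tangential step with the sharp $\B$-type lower bound $K_\Om(z,z)\sim\prod_j b_j(z,z)^2$ recombines everything into the desired $K_\Om(z,z)^{a-1}|r_\Om(z)|^b$.

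The main obstacle is the exponent bookkeeping across the two stages: one must verify that the powers of $A_{lk_l}(z)$ produced by the $n-1$ iterated tangential integrations, combined with the leftover power of $|r_\Om(z)|$ from the normal integration, recombine exactly into $K_\Om(z,z)^{a-1}|r_\Om(z)|^b$ without any spurious negative powers of $|r_\Om(z)|$. The hypotheses $b>-1$ (integrability of the $(t,\eta)$-integral at $t=0$) and $b<2a-2$ (integrability at infinity, i.e., positivity of the Beta-function parameters) are precisely what yield the denominators $(b+1)$ and $(2a-2-b)$ in the stated bound.
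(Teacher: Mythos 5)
Your strategy is sound and genuinely different from the paper's. The paper first extracts $K_\Omega(z,z)^{a-1}$ pointwise, via $|K_\Omega(z,w)|\le C\,K_\Omega(z,z)\bigl(|r_\Omega(z)|/(|r_\Omega(z)|+|r_\Omega(w)|)\bigr)^{2}$ (already a combination of the off-diagonal upper bound with the sharp diagonal lower bound), so that only the \emph{first} power of the kernel bound has to be integrated; the tangential integrations are then handled by an inductive claim involving the auxiliary functions $M_j$, whose constants do not interact with $a$. You instead raise the off-diagonal $\mathcal{B}$-type bound to the power $a$, integrate directly (tangential variables first, normal variables last), and only at the end convert the accumulated factors into $K_\Omega(z,z)^{a-1}$ through the sharp lower bound on the diagonal. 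Both routes use the same two ingredients in a different order; yours costs more exponent bookkeeping and, like the paper's own argument (which silently raises its pointwise inequality to the power $a-1$), produces constants of the form $C^{a}$ in the ``largest summand'' reduction, but it is a legitimate scheme, and your use of $B(\alpha,\beta)\le \alpha^{-1}+\beta^{-1}$ is exactly how the paper bounds its $\kappa_{a,b}$ to get the factor $\frac{2a-1}{(2a-2-b)(b+1)}$.

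There is, however, a concrete arithmetic error at the step you yourself call the main obstacle. After the $n-1$ tangential integrations the residual exponent of $\delta$ is \emph{not} $2a$: starting from $2a\bigl(1+\sum_{j\ge2}1/k_j\bigr)$ and losing $2/k_j$ at each step leaves $2a+(2a-2)\sum_{j\ge2}1/k_j$; the identity you state holds only when $a=1$. Fortunately the slip is self-correcting, and the corrected exponent is precisely what makes your final recombination exact: the normal integration now yields $|r_\Omega(z)|^{\,b+2-2a-(2a-2)\sum_{j\ge2}1/k_j}$, and the extra factors $|r_\Omega(z)|^{-2(a-1)/k_j}$ turn each surviving $A_{jk_j}(z)^{2(a-1)/k_j}$ into $\bigl(A_{jk_j}(z)/|r_\Omega(z)|\bigr)^{2(a-1)/k_j}\lesssim b_j(z',z')^{2(a-1)}$, which together with $|r_\Omega(z)|^{-2(a-1)}\approx b_1(z',z')^{2(a-1)}$ and the sharp diagonal bound gives $K_\Omega(z,z)^{a-1}|r_\Omega(z)|^{b}$; the Beta parameters become $\bigl(b+1,\,2a-2-b+(2a-2)\sum_{j\ge2}1/k_j\bigr)$, still admissible under $-1<b<2a-2$ and still dominated by the stated constant. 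Two smaller points to tighten: ``$t\sim -r_\Omega(w)$'' is not literally a coordinate choice; as in the paper, take $(-r_\Omega(w),\mathrm{Im}(w_1-z_1),w_2,\dots,w_n)$ as real coordinates near the boundary (valid since $w_1$ is the complex normal direction), drop $|\mathrm{Re}(z_1-w_1)|$ from $\delta$, and keep $|r_\Omega(w)|^{b}$ as the integration variable itself rather than replacing it by $t^{b}$; and note that your monomial reduction must be a sum over all choices $(k_2,\dots,k_n)$ (at most $(m-1)^{n-1}$ terms), since the maximizing index may vary with $w$ --- each term is then estimated as above.
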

\begin{remark}\label{rm:rm1}
To obtain \eqref{eqn:Iab} for $a=1$, it suffices to assume that $K_\Om$ is of $\mathcal{B}$-type, instead of sharp $\mathcal{B}$-type.
\end{remark}
\begin{proof} We choose  $U$ a small neighbourhood of $z_0$ such that $\Phi_z(U)\subset \BB(z',c)$ for any $z\in U$,  where  the ball $\BB(z',c)$ and the biholomorphism $\Phi_z$ are given in Definition~\ref{def:local B type} and~\ref{def:global B type}, respectively. By the invariant formula of the Bergman kernel
	\begin{eqnarray}\label{eqn:invariance}
	K_{\Om}(z,w)=\det J_\C\Phi_z(z)K_{\Phi_z(\Om)}(\Phi_z(z),\Phi_z(w))\overline{\det J_\C\Phi_z(w)},
	\end{eqnarray}
	where $C^{-1}\le |\det J_\C\Phi_z(w)|\le C$ for all $w\in U$, 
we have 
$$I_{a,b}(z)\le C\int_{\Om'\cap \BB(z',c)}|K_{\Om'}(z',w')|^{a}|r_{\Om'}(w')|^{b}dw',
$$
where $\Om'=\Phi_z(\Om)$, $z'=\Phi_z(z)$, $w'=\Phi_z(w)$ and $r_{\Om'}(w')=r_{\Om}(\Phi_z^{-1}(w'))$. Thus, in order to show \eqref{eqn:Iab}, it suffices to show that 
	\begin{equation}\label{eqn:Iab prime}
	\begin{split}
I'_{a,b}\left(z'\right) &:=\intop_{\Omega'\cap \BB(z',c)}\left|K_{\Om'}\left(z',w'\right)\right|^{a}\left|r_{\Om'}\left(w'\right)\right|^{b}dw' \\
		& \leq C\frac{2a-1}{(2a-2-b)(b+1)}K_{\Om'}(z',z')^{a-1}\left|r_{\Om'}\left(z'\right)\right|^{b},
	\end{split}
	\end{equation} 
in which $K_{\Om'}$ is a kernel of sharp $\B$-type at $z'$.
Here and in what follows in this proof, we omit the subscript $\Om'$ and the superscript {\it prime} for convenience. 

	  It is clear that $b_j(z,w)\le b_j(z,z)$  and $b_1(z,w)\le \dfrac{1}{|r(z)|+|r(w)|}$ and hence 
	\begin{eqnarray}
	\label{eqn:Kzw Kzz}|K(z,w)|\le CK(z,z){\left(\frac{|r(z)|}{|r(z)|+|r(w)|}\right)^{2}},\quad \T{	for all $w\in \Om\cap \BB(z,c)$}.
	\end{eqnarray}
Since $a\geq1$, from \eqref{eqn:Kzw Kzz} one has 
	\begin{equation}\label{eqn:Jab}
	I_{a,b}(z)\le {C|r(z)|^{2a-2}K^{a-1}(z,z)J_{a,b}(z)},
	\end{equation}
where
	\begin{equation*}
		 J_{a,b}(z)=\int_{\Om}\frac{|r(w)|^b}{(|r(z)|+|r(w)|)^{2a-2}}|K(z,w)|dw.
	\end{equation*}
	We shall use the polar coordinates in $z_{k}-w_{k}$ with $\rho_{k}:=\left|z_{k}-w_{k}\right|$ for $k=2,\ldots,n$ and the change of variables  $\rho_1:=-r(w)$, $\xi:=\left|z_1-w_1\right|$, to estimate the integral $J_{a,b}$. 
	First, we define the increasing sequence $\{M_j(z,\rho)\}_{j=1}^n$ by induction: $M_1(z,\rho)=|r(z)|+\rho_1$ and
	\begin{eqnarray}
	\label{eqn:Mj}
	M_j(z,\rho)=M_{j-1}(z,\rho)+\sum_{k=2}^{m}A_{jk}\left(z\right)\rho_j^{k}, \quad \T{for}\quad  j=2,\dots,n.
	\end{eqnarray}
	We remark that $M_j$ is independent of $\rho_l$ for $l=j+1,j+2,\dots,n$.
	Then, observe that  $b_j(z,w)\le m\rho_{j}^{-1}$ and also
	\begin{equation}
	b_j(z,w)=\sum_{k=2}^m\left(\frac{A_{jk}(z)}{\delta(z,w)}\right)^{\frac{1}{k}}\le \sum_{k=2}^m\left(\frac{A_{jk}(z)}{M_{j-1}(z,\rho)}\right)^{\frac{1}{k}}
	\end{equation}
	since $\delta(z,w)=\xi+M_n(z,\rho)$ is greater than both  $A_{jk}\rho_j^k$ and $M_{j-1}(z,\rho)$. Therefore, the integral $J_{a,b}$, after changing coordinates, can be estimated as
	\begin{equation*}
		\begin{split}
			J_{a,b}(z)\le& C\bigintsss_0^\infty \!\!\!\cdots\!\! \bigintsss_0^\infty\frac{\rho_1^b}{M_1(z,\rho)^{2a-2}}\prod_{j=2}^n\left[\sum_{k=2}^m\left(\frac{A_{jk}(z)}{M_{j-1}(z,\rho)}\right)^{\frac{1}{k}}\right]\frac{1}{\left(\xi+M_n(z,\rho)\right)^2}d\rho_1\cdots d\rho_nd\xi\\
			\le&C\bigintsss_0^\infty \!\!\!\cdots\!\! \bigintsss_0^\infty\frac{\rho_1^b}{M_1(z,\rho)^{2a-2}}\prod_{j=2}^n\left[\sum_{k=2}^m\left(\frac{A_{jk}(z)}{M_{j-1}(z,\rho)}\right)^{\frac{1}{k}}\right]\frac{1}{M_n(z,\rho)}d\rho_1\cdots d\rho_n,
		\end{split}
	\end{equation*}
	where the second inequality follows by integrating with respect to $\xi$. We compute this integral by the following claim. \\
	
	{\bf Claim:} {\it For $j=2,\dots,n$, we have 
		\begin{eqnarray}
		\label{eqn:int Mj}
		\left[\sum_{k=2}^m\left(\frac{A_{jk}(z)}{M_{j-1}(z,\rho)}\right)^{\frac{1}{k}}\right]\bigintssss_0^\infty\frac{d\rho_j}{M_j(z,\rho)}\le \frac{C}{M_{j-1}(z,\rho)}.
		\end{eqnarray}}
	\begin{proof}[Proof of Claim.] Since $M_j(z,\rho)\ge M_{j-1}(z,\rho)+A_{jk}(z)\rho_j^k$ for all $k=2,\dots,m$, the LHS of \eqref{eqn:int Mj} is bounded by
		\begin{eqnarray}
		\label{eqn:int Mj1}
		\sum_{k=2}^m	\left(\frac{A_{jk}(z)}{M_{j-1}(z,\rho)}\right)^{\frac{1}{k}}\bigintssss_0^\infty\frac{d\rho_j}{M_{j-1}(z,\rho)+A_{jk}(z)\rho_j^k}.
		\end{eqnarray}
		For each $k=2,\dots,m$, if $A_{jk}(z)=0$, there is nothing to do; otherwise, we use the change of the coordinate  $x:=A_{jk}^{\frac{1}{k}}(z)\rho_j$  to get 
		\begin{equation*}
		\begin{split}
			\left(\frac{A_{jk}(z)}{M_{j-1}(z,\rho)}\right)^{\frac{1}{k}}\bigintsss_0^\infty \frac{d\rho_j}{M_{j-1}(z,\rho)+A_{jk}(z)\rho_j^k} &=	\frac{1}{\left(M_{j-1}(z,\rho)\right)^{\frac{1}{k}}}\bigintsss_0^\infty \frac{d x}{M_{j-1}(z,\rho)+x^k} \\
				& \le \frac{C}{M_{j-1}(z,\rho)}.
		\end{split}
		\end{equation*}
	This proves the claim.
	\end{proof}
	Coming back to the computation of $J_{a,b}(z)$, by the claim \eqref{eqn:int Mj} and an induction argument from $j=n$ to $j=2$, we have
	\begin{equation}\label{eqn:Jab end}
	J_{a,b}(z)\le C\bigintsss_0^\infty \frac{\rho_1^bd\rho_1}{M_1(z,\rho)^{2a-1}} =C\bigintsss_0^\infty \frac{\rho_1^bd\rho_1}{(|r(z)|+\rho_1)^{2a-1}}=C|r(z)|^{b-2a+2}\kappa_{a,b},
	\end{equation}
	where $$\kappa_{a,b}=\int_{0}^\infty\frac{t^bdt}{(1+t)^{2a-1}}\le \int_0^1t^bdt+\int_{1}^{\infty}t^{b-2a+1}dt=\frac{2a-1}{(2a-2-b)(b+1)}.$$
	By \eqref{eqn:Jab} and \eqref{eqn:Jab end}, the conclusion of Proposition~\ref{prop:main} follows.
\end{proof}

The following corollary is a combination of a generalised Schur's test (Theorem~\ref{thm:Schur's test} below) and Proposition~\ref{prop:main}.
\begin{corollary}\label{co1} Let $\Om$ be a bounded domain in $\C^n$ and $1<p\le q<\infty$. 
Assume that the Bergman kernel $K_{\Om}$ is  of sharp $\B$-type and $\psi:\Om\to \C$ satisfies 
$$\left|\psi(z)\right|\le K_{\Om}^{\frac{1}{q}-\frac{1}{p}}(z,z)$$ 
almost everywhere. Then, for each $z_0\in b\Om$ there exists a neighbourhood $U$ of $z_0$ such that the Toeplitz operator $T_{\psi,U}$ defined by
$$(T_{\psi,U}u)(z):=\int_{\Om\cap U}K_{\Om}(z,w)\psi(w)u(w)dw\quad \T{for }z\in \Om\cap U, $$ 
maps from $L^p(\Om\cap U)$ to $L^q(\Om\cap U)$ continuously and 
\begin{eqnarray}
\label{eqn:norm T local}
\no{T_{\psi,U}u}_{L^q(\Om\cap U)}\le C\left(\frac{p}{p-1}+q\right)^{1-\frac{1}{p}+\frac{1}{q}}\no{u}_{L^p(\Om\cap U)}
\end{eqnarray}
for all $u\in L^p(\Om\cap U)$, where $C$ is independent of $p,q$ and $u$.
\end{corollary}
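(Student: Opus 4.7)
The plan is to invoke the generalized Schur test (Theorem~\ref{thm:Schur's test}) for the non-negative integral kernel $H(z,w)=|K_\Om(z,w)|\,|\psi(w)|\,\chi_{\Om\cap U}(z)\chi_{\Om\cap U}(w)$. The form of the test I will use, which follows from H\"older's inequality in $w$ and Minkowski's integral inequality (the latter valid because $q/p\ge 1$), produces $\|T\|_{L^p\to L^q}\le M_1^{1/p'}M_2^{1/q}$ whenever positive weights $\phi(w),g(z)$ can be found with
$$\int H(z,w)\,\phi(w)^{p'}\,dw\le M_1\,g(z)^{p'}\quad\T{and}\quad \int H(z,w)^{q/p}\,g(z)^{q}\,dz\le M_2\,\phi(w)^{q}.$$

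The central idea is to absorb the factor $K_\Om(w,w)^{-\alpha}$ with $\alpha=\tfrac{1}{p}-\tfrac{1}{q}$ hidden in $|\psi(w)|$ by matching powers of $K_\Om(w,w)$ in $\phi$. I will take
$$g(z)=|r_\Om(z)|^{-s},\qquad \phi(w)=K_\Om(w,w)^{(q-p)/(pp'q)}\,|r_\Om(w)|^{-s},\qquad s=\frac{1}{p'+q}.$$
An elementary calculation yields $\phi(w)^{p'}=K_\Om(w,w)^{\alpha}|r_\Om(w)|^{-sp'}$, so the $K_\Om(w,w)$-factor cancels against the hypothesis $|\psi(w)|\le K_\Om(w,w)^{-\alpha}$ and the first Schur condition reduces to the purely $|r_\Om|$-weighted estimate $\int|K_\Om(z,w)||r_\Om(w)|^{-sp'}dw\le M_1|r_\Om(z)|^{-sp'}$. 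This is exactly Proposition~\ref{prop:main} applied with $a=1$ and $b=-sp'\in(-1,0)$, producing $M_1=C/\bigl(sp'(1-sp')\bigr)$.

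For the second Schur condition, the factor $|\psi(w)|^{q/p}\le K_\Om(w,w)^{-\alpha q/p}$ is independent of $z$ and factors out of the $z$-integral; Proposition~\ref{prop:main} applied with $a=q/p\ge 1$ and $b=-sq$ (admissible since $sq<1$ and $q\ge p$ forces $b>2a-2$) then bounds the remaining integral by a constant multiple of $K_\Om(w,w)^{q/p-1}|r_\Om(w)|^{-sq}$. A short algebraic check shows that the net $K_\Om(w,w)$-exponent is $(q/p)(1-\alpha)-1=(q-p)/(pp')$, which is precisely the power of $K_\Om(w,w)$ carried by $\phi(w)^{q}$. The second condition is thus verified with $M_2=C(2q/p-1)/\bigl[(2q/p-2+sq)(1-sq)\bigr]$.

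Substituting $s=1/(p'+q)$ gives $sp'=p'/(p'+q)$ and $sq=q/(p'+q)$, so $M_1=C(p'+q)^{2}/(p'q)$; the inequality $(p'-1)(q-1)\ge 1$, which follows from $p\le q$, then forces $M_1\le C(p'+q)$ and hence $M_1^{1/p'}\le C(p'+q)^{1/p'}$. A parallel but more delicate analysis of $M_2$, splitting naturally into the regimes where $q/p$ is near $1$ and where $q/p$ is large, will yield $M_2^{1/q}\le C(p'+q)^{1/q}$. Combining the two produces the target bound
$$\|T_{\psi,U}\|_{L^p(\Om\cap U)\to L^q(\Om\cap U)}\le M_1^{1/p'}M_2^{1/q}\le C(p'+q)^{1/p'+1/q}=C\left(\frac{p}{p-1}+q\right)^{1-\frac{1}{p}+\frac{1}{q}}.$$
The principal technical obstacle will be the case analysis controlling $M_2$, since both factors $2q/p-2+sq$ and $1-sq$ in its denominator can become small simultaneously; the specific choice $s=1/(p'+q)$ is calibrated to keep the product $M_1^{1/p'}M_2^{1/q}$ uniformly bounded across all admissible $(p,q)$.
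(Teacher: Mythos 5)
Your proposal is correct and essentially reproduces the paper's own proof: you use the same generalised Schur test with splitting exponent $1/p'$ (so the kernel powers are $|K_\Om|$ and $|K_\Om|^{q/p}$), the same weights (your $g=|r_\Om|^{-s}$ and $\phi=K_\Om(\cdot,\cdot)^{(q-p)/(pp'q)}|r_\Om|^{-s}$ are exactly the paper's $h_1=g=|r_\Om|^{-\beta}$, $h_2=K_\Om(\cdot,\cdot)^{(q-p)/(pq)}|r_\Om|^{-\beta}$ with $\beta=s=\tfrac{1}{p'+q}$, with $\psi$ absorbed via $|\psi|K_\Om^{1/p-1/q}\le1$), and the same two applications of Proposition~\ref{prop:main} with $(a,b)=(1,-sp')$ and $(q/p,-sq)$. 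The only points to tidy up: the admissibility condition in the second application is $b<2a-2$, i.e.\ $-sq<2q/p-2$, which holds since $q\ge p$ (your parenthetical states the reversed inequality), and the ``delicate case analysis'' you anticipate for $M_2$ is unnecessary --- since $2q/p-2\ge0$ and $sq\le1$ one has $\tfrac{2q/p-1}{2q/p-2+sq}\le\tfrac{1}{sq}$, hence $M_2\le C\,\tfrac{(p'+q)^2}{p'q}\le C(p'+q)$ exactly as for $M_1$, which is the paper's one-line argument via $\tfrac{x+a}{x+b}\le\tfrac{a}{b}$.
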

\begin{proof}
	Let $\alpha=\dfrac{1}{p'}=1-\dfrac1p$ and $0< \beta<\min\{\dfrac{1}{p'},\dfrac{1}{q}\}$. Apply Proposition~\ref{prop:main} twice for the pair $(a,b)$ replaced by $(1,-\beta p')=(\alpha p',-\beta p')$ and by $(\frac{q}{p},-\beta q)=((1-\alpha)q,-\beta q)$, we have
	\begin{eqnarray}
	I_{(1,-\beta p')}(z)&=&\int_{\Om\cap U}|K_{\Om}(z,w)|^{\alpha p'}\left(|r_{\Om}(w)|^{-\beta }\right)^{p'}dw\le C_1 \left(|r_{\Om}(z)|^{-\beta}\right)^{p'},\\
		I_{(\frac{q}{p},-\beta q)}(w)&=&\int_{\Om\cap U}|K_{\Om}(z,w)|^{(1-\alpha)q}\left(|r_{\Om}(z)|^{-\beta }\right)^{q}dz\le C_2 \left(K_{\Om}(w,w)^{\frac{(1-\alpha)q-1}{q}}|r_{\Om}(w)|^{-\beta}\right)^{q} \nonumber
	\end{eqnarray}
where 
$$C_1=\frac{C}{\beta p'(1-\beta p')}\quad \T{and }\quad C_2= C\frac{\frac{2q}{p}-1}{(\frac{2q}{p}-2+\beta q)(1-\beta q)}. $$
Notice that, by the hypothesis of $\psi$, $$(|r_{\Om}(w)|^{-\beta})^{-1} K_{\Om}(w,w)^{\frac{(1-\alpha)q-1}{q}} |r_{\Om}(w)|^{-\beta}\psi(w)=\psi(w) K_{\Om}(w,w)^{\frac{1}{p}-\frac{1}{q}}\le 1$$
for all $w\in\Om$. 
Thus, the hypothesis of Theorem~\ref{thm:Schur's test} holds for 
	\begin{eqnarray*}
		X=Y=\Om\cap U, & &  h_1(w)=|r_{\Om}(w)|^{-\beta}, \\
		h_2(w)=K_{\Om}(w,w)^{\frac{(1-\alpha)q-1}{q}}|r_{\Om}(w)|^{-\beta},~~&\T{ and }&~~ g(z)=|r_{\Om}(z)|^{-\beta}.
	\end{eqnarray*}
Therefore, $T_{\psi, U}:L^p(\Om\cap U)\to L^q(\Om\cap U)$ continuously and  
$$\no{T_{\psi, U}u}_{ L^q(\Om\cap U)}\le C\inf_{0< \beta<\min\{\frac{1}{p'},\frac{1}{q}\}} \{\tau(\beta)\}\no{u}_{L^p(\Om\cap U)},$$
where 
$$\tau(\beta)=\left(\frac{1}{\beta p'(1-\beta p')}\right)^{\frac{1}{p'}}\left(\frac{\left(\frac{2q}{p}-2\right)+1}{\left(\left(\frac{2q}{p}-2\right)+\beta q\right)(1-\beta q)}\right)^{\frac{1}{q}}.$$

We finish the proof of this corollary by showing that 
\begin{eqnarray}
\label{eqn:tau}\inf_{0< \beta<\min\{\frac{1}{p'},\frac{1}{q}\}} \tau(\beta)\le 4 \left(\frac{p}{p-1}+q\right)^{1-\frac{1}{p}+\frac{1}{q}}.
\end{eqnarray}
We first get rid of term $\left(\frac{2q}{p}-2\right)$ in $(\cdots)^{\frac{1}{q}}$ by the inequality $\frac{x+a}{x+b}\le \frac{a}{b}$ for $a\ge b>0$ and $x\ge 0$. Then we choose $\beta=\frac{1}{p'+q}\le \min\{\frac{1}{p'},\frac{1}{q}\}$. It follows, 
$$\beta p'(1-\beta p')=\beta q(1-\beta q)=\frac{(p'+p)^2}{p'q}=\left(\frac{1}{p'}+\frac{1}{p}\right)(p'+p).$$ Therefore,
$$\tau\left(\frac{1}{p'+q}\right)\le\left[ \left(\frac{1}{p'}+\frac{1}{p}\right)(p'+p)\right]^{\frac{1}{p'}+\frac{1}{p}}\le 4\left(p'+p\right)^{\frac{1}{p'}+\frac{1}{p}}=4\left(\frac{p}{p-1}+q\right)^{1-\frac{1}{p}+\frac{1}{q}},$$
where the second inequality follows by $x^x\le 4$ for $x=\frac{1}{p'}+\frac{1}{p}\in [0,2]$. This proves \eqref{eqn:tau}.
\end{proof}

The main result of this section is the following theorem, in which we prove the gain $L^p\T-L^q$ estimate of Bergman-Toeplitz operators $T_\psi$.

\begin{theorem}\label{thm:main1}
	Let $\Om$ be a bounded domain in $\C^n$. Assume that the Bergman kernel  $K_{\Om}$ is of sharp $\B$-type. 
	\begin{enumerate}
		\item 	If $\left|\psi(z)\right|\le \big(K_{\Om}(z,z)\big)^{\frac{1}{q}-\frac{1}{p}}$ almost everywhere with  $1<p\le q<\infty$, then $T_{\psi}:L^p(\Om)\to L^q(\Om)$ continuously. Furthermore, 
		$$\no{T_\psi}_{L^p(\Om)\to L^q(\Om)}\le C_{\Om,\psi} \left(\frac{p}{p-1}+q\right)^{1-\frac{1}{p}+\frac{1}{q}},$$
		where $C_{\Om,\psi}$ is independent of $p,q$.
		\item  If $\left|\psi(z)\right|\le \big(K_{\Om}(z,z)\big)^{-1}$ almost everywhere, then $T_{\psi}:L^1(\Om)\to L^\infty(\Om)$ continuously.
	\end{enumerate} 
	
\end{theorem}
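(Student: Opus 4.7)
My plan is to globalize the local $L^p$--$L^q$ estimate from Corollary~\ref{co1} via a finite cover of $b\Om$, using the off-diagonal continuity of $K_\Om$ to handle the remaining pieces, and to prove the endpoint $L^1$--$L^\infty$ claim by showing that the kernel $K_\Om(z,w)\psi(w)$ is essentially bounded on $\Om\times\Om$.

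For part (1), I use the compactness of $b\Om$ to extract finitely many neighborhoods $U_1,\ldots,U_N$ from Corollary~\ref{co1} covering $b\Om$, shrink each to $U_j'\subset\subset U_j$ so that $\{U_j'\}$ still cover $b\Om$, and fix a smooth partition of unity $\{\chi_j\}_{j=0}^N$ subordinate to $\{U_1',\ldots,U_N',W'\}$, where $W'\subset\subset\Om$ is a relatively compact open neighborhood of $W:=\Om\setminus\bigcup_j U_j'$. Decomposing $u=\sum_j\chi_j u$, I bound each $T_\psi(\chi_j u)$ in two pieces. Restricted to $U_j$ (for $j\ge 1$), Corollary~\ref{co1} applied to the symbol $\psi\chi_j$ gives the desired bound with the factor $(p/(p-1)+q)^{1-1/p+1/q}$. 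On $\Om\setminus U_j$, $\supp(\chi_j u)\subset U_j'$ is separated from $\Om\setminus U_j$, so $|K_\Om(z,w)|$ is uniformly bounded by continuity of $K_\Om$ on $\overline\Om\times\overline\Om\setminus\Delta_b$, yielding an $L^\infty$ bound using $\|u\|_{L^1}\lesssim\|u\|_{L^p}$. The interior piece $T_\psi(\chi_0 u)$ is handled identically because $\supp(\chi_0 u)\subset W'$ keeps $(z,w)$ away from $\Delta_b$. Summing the finitely many contributions and noting that the $p,q$-independent bounds are absorbed into the factor $(p/(p-1)+q)^{1-1/p+1/q}\ge 1$ yields the claimed norm estimate.

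For part (2), since $L^1\to L^\infty$ boundedness of $T_\psi$ is equivalent to essential boundedness of its kernel $K_\Om(z,w)\psi(w)$, it suffices (by $|\psi(w)|\le K_\Om(w,w)^{-1}$) to prove
$$\sup_{(z,w)\in\Om\times\Om}\frac{|K_\Om(z,w)|}{K_\Om(w,w)}<\infty.$$
Away from $\Delta_b$ the ratio is controlled by continuity of $K_\Om$ together with the positive lower bound of $K_\Om(w,w)$ on compact subsets of $\Om$. Near a boundary diagonal point I apply Definition~\ref{def:global B type} at the base point $w$: combining the Hermitian symmetry $|K_\Om(z,w)|=|K_\Om(w,z)|$ with the biholomorphic invariance~\eqref{eqn:invariance}, both $|K_\Om(z,w)|$ and $K_\Om(w,w)$ transfer to $K_{\Phi_w(\Om)}$ evaluated with base point $w^*:=\Phi_w(w)$, and the sharp $\B$-type estimate yields
$$\frac{|K_\Om(z,w)|}{K_\Om(w,w)}\lesssim\prod_{j=1}^n\frac{b_j^2(w^*,z^*)}{b_j^2(w^*,w^*)},\quad z^*:=\Phi_w(z).$$
Since $\delta(w^*,z^*)\ge|r(w^*)|+|r(z^*)|\ge\tfrac12\delta(w^*,w^*)$ and each $A_{jk}$ appearing in $b_j$ is evaluated at the same point $w^*$ in numerator and denominator, a direct computation gives $b_j(w^*,z^*)\le C\,b_j(w^*,w^*)$, so the ratio is bounded uniformly.

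The main obstacle I anticipate is in (2): a direct Cauchy--Schwarz inequality $|K_\Om(z,w)|^2\le K_\Om(z,z)K_\Om(w,w)$ only yields $|K_\Om(z,w)|/K_\Om(w,w)\le(K_\Om(z,z)/K_\Om(w,w))^{1/2}$, which is unbounded as $z\to b\Om$ with $w$ fixed. The sharp $\B$-type hypothesis is genuinely needed, and one must center the $\B$-system at $w$ (not $z$), so that the coefficients $A_{jk}(w^*)$ coincide in the numerator and denominator of the ratio; otherwise the continuity of $A_{jk}$ across varying base points would have to enter as an extra hypothesis.
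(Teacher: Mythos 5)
Your proposal is correct and follows essentially the paper's own route: a partition of unity near $b\Om$, the local Schur-test estimate of Corollary~\ref{co1} on the patches $\Om\cap U_j$, and continuity of $K_\Om$ off $\Delta_b$ for the remaining pieces (whether one decomposes the input $u$ or the output $T_\psi u$ is immaterial), while in part (2) your direct verification that $K_\Om(z,w)\psi(w)$ is bounded replaces the same local/global splitting. Your insistence on centering the sharp $\B$-type estimate at $w$ via Hermitian symmetry is exactly the step the paper leaves implicit (its display bounds $|\psi(w)K_\Om(z,w)|$ by $|K_\Om(z,w)|K_\Om(z,z)^{-1}$, citing \eqref{eqn:Kzw Kzz}, although the hypothesis supplies $K_\Om(w,w)^{-1}$), so your version is, if anything, the more carefully stated one.
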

\begin{proof}
We choose a partition of unity $\{\chi_j\}_{j=0}^{N}$ and a covering $\{U_j\}_{j=0}^N$ to $\overline\Om$ 
so that $ {\T{supp}(\chi_j)}\Subset U_j$, $U_{0}\Subset \Omega$,  $b\Omega\subset\bigcup_{j=1}^{N}U_{j}$,
and the integral estimates   in Proposition~\ref{prop:main} hold on $U_j$ for all $j=1,\dots, N$. Denote by ${\bf 1}_A$ the characteristic function of a $A\subset \Om$. So we can decompose $T_\psi u$ as 
$$T_{\psi}u=\sum_{j=0}^N\chi_jT_{\psi}u=\chi_0T_\psi u+\sum_{j=1}^N \chi_j T_{\psi}(u{\bf 1}_{\Om\cap U_j})+\sum_{j=1}^N \chi_j T_{\psi}(u{\bf 1}_{\Om\setminus U_j}).$$
It follows 
\begin{eqnarray}
\label{eqn:a1}\no{T_{\psi}u}_{L^q(\Om)}\le \no{\chi_0T_\psi u}_{L^q(\Om)}+\sum_{j=1}^N \no{\chi_j T_{\psi}(u{\bf 1}_{\Om\cap U_j})}_{L^q(\Om)}+\sum_{j=1}^N \no{\chi_j T_{\psi}(u{\bf 1}_{\Om\setminus U_j})}_{L^q(\Om)}.
\end{eqnarray}
In order to estimate $\no{\chi_0T_\psi u}_{L^q(\Om)}$ and  $\no{\chi_j T_{\psi}(u{\bf 1}_{\Om\setminus U_j})}_{L^q(\Om)}$ with $j\ge 1$, we use the continuity up to the off-diagonal boundary of the Bergman kernel. Indeed, $$K_{\Om}\in C\left(\left(\overline{\Omega}\times\overline{\Omega}\right)\setminus \left\{ \left(z,z\right):z\in b\Omega\right\} \right)$$ implies that there exists a positive constant $C$
 such that 
\[
\left|K_{\Om}\left(z,w\right)\right|\le C
\quad \T{
for all }\quad (z,w)\in \left(\bigcup_{j=1}^N \left((\T{supp}(\chi_j))\times (\Om\setminus \bar U_j)\right)\bigcup \left(\T{supp}(\chi_0))\times \Om\right)\right).\] Thus,
 for $j=1,\dots,N$, and $z\in \Om$, we have 
\begin{equation}
\left|\left(\chi_jT_{\psi} (u{\bf1}_{\Om\setminus U_j})\right)(z)\right|=\left|\int_{\Om}\chi_j(z)K(z,w){\bf1}_{\Om\setminus U_j}u(w)\psi(w)dw\right|\le  C\int_{\Om}|u(w)|dw,
\end{equation}
and hence 
\begin{eqnarray}
\label{eqn:a2}\no{\chi_jT_{\psi} (u{\bf1}_{\Om\setminus U_j})}_{L^q(\Om)}\le C\no{u}_{L^p(\Om)},\end{eqnarray}
for all $1\le p\le q\le \infty$, where $C$ is independent of $p,q$. Analogously, 
\begin{eqnarray}
\label{eqn:a3}\no{\chi_0T_{\psi} (u)}_{L^q(\Om)}\le C\no{u}_{L^p(\Om)}.\end{eqnarray}

To estimate the norm $\no{\chi_j T_{\psi}(u{\bf 1}_{\Om\cap U_j})}_{L^q(\Om)}$, $j=1,\dots,N$, we combine  the fact that
$$\no{\chi_jT_{\psi} (u{\bf1}_{U_j})}_{L^q(\Om)}\le \left(\int_{\Om\cap U_j}\left(\int_{\Om\cap U_j}|K_{\Om}(z,w)u(w)\psi(w)|dw\right)^qdz\right)^\frac{1}{q}=\no{T_{\psi,U_j}u}_{L^q(\Om\cap U_j)}$$
and Corollary~\ref{co1} to yield
\begin{eqnarray}
\label{eqn:a4}\no{\chi_jT_{\psi} (u{\bf1}_{U_j})}_{L^q(\Om)}\le C\left(\frac{p}{p-1}+q\right)^{1-\frac{1}{p}+\frac{1}{q}}\no{u}_{L^p(\Om)},\end{eqnarray}
 for given $1<p\le q<\infty$.
 From \eqref{eqn:a1}, \eqref{eqn:a2}, \eqref{eqn:a3} and \eqref{eqn:a4}, we have the desired inequality
$$\no{T_{\psi} u}_{L^q(\Om)}\le C\left(\frac{p}{p-1}+q\right)^{1-\frac{1}{p}+\frac{1}{q}}\no{u}_{L^p(\Om)},$$
for the given $1<p\le q<\infty$, provided $u\in L^p(\Om)$. 
 This proves  Part (1) in Theorem~\ref{thm:main1}.\\
 
 Similarly, the proof of Part (2) follows by
  \eqref{eqn:a1}, \eqref{eqn:a2}, \eqref{eqn:a3} (for the choice $p=1,q=\infty$) and 
  $$\no{\chi_jT_{\psi} (u{\bf1}_{U_j})}_{L^\infty(\Om)}\le \no{u}_{L^1(\Om)}$$
since 
 $$\left|\psi(w)K_{\Om}\textbf{}(z,w)\right|\le \left|K_{\Om}(z,w)\right|K_{\Om}(z,z)^{-1}\le C,\quad \T{(by \eqref{eqn:Kzw Kzz})}$$
for any $z,w\in\Om\cap U_j$ and $j=1,\dots, N$. This is complete the proof of Theorem~\ref{thm:main1}.
 \end{proof}
The following corollary is the  self $L^p$ boundedness of the Bergman projection that follows immediately from Theorem~\ref{thm:main1} and using Remark~\ref{rm:rm1} to avoid the sharp condition of the Bergman kernel. 
\begin{corollary}
		Let $\Om$ be a bounded domain in $\C^n$. Assume that the Bergman kernel  $K_{\Om}$ is of $\B$-type, that means: $(i)$  $K_\Om\in C(\overline\Om\times\overline\Om\setminus{\Delta_b})$,  and $(ii)$ for any $z\in \overline\Om$ near the boundary $b\Om$ there exists a biholomorphism $\Phi_z$ depending on $z$ but with holomorphic Jacobian uniformly nonsingular so that the Bergman kernel $K_{\Om'}$ associated to the domain $\Om':=\Phi_z(\Om)$ is of  $\B$-type at $z':=\Phi_z(z)$.
		
		Then the Bergman projection $P$ is self $L^p$ bounded for $1<p<\infty$ with the norm$$\no{P}_{L^p(\Om)\to L^p(\Om)}\le C_{\Om} \frac{p^2}{p-1},$$
			where $C_{\Om}$ is independent of $p$.
					\end{corollary}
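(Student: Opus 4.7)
The plan is to identify the Bergman projection $P$ with the Toeplitz operator $T_\psi$ of symbol $\psi\equiv 1$, and then invoke Theorem \ref{thm:main1} Part (1) in the specialised case $p=q$. Since $K_\Om(z,z)^{1/q-1/p}=1$ when $p=q$, the trivial bound $|\psi|=1$ satisfies the symbol hypothesis automatically, so the Bergman projection falls under the scope of that theorem.

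With $p=q$, the quantitative bound of Theorem \ref{thm:main1} becomes
$$\no{P}_{L^p(\Om)\to L^p(\Om)}\le C_\Om\left(\frac{p}{p-1}+p\right)^{1-\frac{1}{p}+\frac{1}{p}}=C_\Om\cdot\frac{p^2}{p-1},$$
where I used that the exponent collapses to $1$ and that $\frac{p}{p-1}+p=\frac{p+p(p-1)}{p-1}=\frac{p^2}{p-1}$. This is precisely the asserted estimate.

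The one delicate point — and the main thing to verify — is that Theorem \ref{thm:main1} is stated under the \emph{sharp} $\B$-type assumption, while the corollary only assumes $\B$-type. To reconcile this, I would retrace the proof chain: Theorem \ref{thm:main1} invokes Corollary \ref{co1}, whose proof applies Proposition \ref{prop:main} with parameters $(a,b)=(1,-\beta p')$ and $(a,b)=(q/p,-\beta q)$. When $p=q$, both pairs have $a=1$, and Remark \ref{rm:rm1} explicitly states that for $a=1$ the integral estimate \eqref{eqn:Iab} holds under the weaker $\B$-type hypothesis alone (the sharp lower bound on the diagonal is only used to produce the factor $K_\Om(z,z)^{a-1}$, which disappears when $a=1$). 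The remaining ingredients in the proof of Theorem \ref{thm:main1} — the partition of unity decomposition, the off-diagonal continuity of $K_\Om$, and the Schur test optimisation leading to \eqref{eqn:tau} — make no use of sharpness.

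I expect no real obstacle beyond this bookkeeping check, since the corollary is essentially the $p=q$ specialisation of Theorem \ref{thm:main1} with the sharpness assumption shown to be superfluous in this regime. The optimisation of $\beta=\frac{1}{p'+q}=\frac{1}{p'+p}$ chosen in the proof of Corollary \ref{co1} already delivers the claimed constant, and the $p^2/(p-1)$ growth recovers the well-known sharp Zhu-type bound on the ball.
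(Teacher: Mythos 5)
Your proposal is correct and follows the paper's own route exactly: the corollary is obtained by taking $\psi\equiv 1$ and $p=q$ in Theorem~\ref{thm:main1}, computing $\bigl(\tfrac{p}{p-1}+p\bigr)^{1}=\tfrac{p^2}{p-1}$, and invoking Remark~\ref{rm:rm1} to drop the sharpness hypothesis since both applications of Proposition~\ref{prop:main} in Corollary~\ref{co1} then have $a=1$. Your bookkeeping check of where sharpness enters is precisely the justification the paper intends.
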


\vspace{10pt}

\section{Sharp estimates of the Bergman-Toeplitz operators} \label{S3}
 As we have already showed in \S\ref{S2} that $T_{K^{-\alpha}}$ is bounded from $L^p(\Om)$ to $L^q(\Om)$ if $\alpha\ge \frac{1}{p}-\frac{1}{q}$. In this section we show that this gain boundedness is sharp with an additional hypothesis to the $\B$-system. \\
 
  Denote by 
  $$P_\lambda(z')=\{w'\in \C^n: |w'_j-z'_j|b_{j}(z',z')\le \lambda, \T{for all~} j=1,2\dots n\}  $$
  a $\B$-polydisc with centre $z'$ associated to the $\B$-system defined in \S\ref{S2}.

 \begin{theorem}\label{thm:main2}
 	Let $\Om$ be a bounded smooth pseudoconvex domain in $\C^n$ such that the Bergman kernel is of sharp $\B$-type. Assume further that there are {universal constants $\lambda$ and $C$} such that for any $z\in \Om$ near the boundary $b\Om$, after mapping by the biholomorphism $\Phi_z$ (in Definition~\ref{def:global B type}), the $\B$-system associated to $z':=\Phi_z(z)$ has the property:
 $P_{\lambda}(z') \subset \Om':=\Phi_z(\Om)$ and $K_{\Om'}(w',w')\le C K_{\Om'}(z',z')$, $\forall w'\in P_{\lambda}(z')$. 
 		 
 	Then, if $T_{K_{\Om}^{-\alpha}}:L^p(\Om)\to L^q(\Om)$ continuously with $1<p\le q<\infty$ then $\alpha\ge \frac{1}{p}-\frac{1}{q}$.
 \end{theorem}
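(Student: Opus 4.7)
The plan is to test $T_\psi$ (with $\psi=K_\Omega^{-\alpha}$) on functions concentrated in a small pseudo-ball near the boundary, indexed by $z_*\in\Omega$ approaching $b\Omega$, and show the quotient $\|T_\psi f_{z_*}\|_{L^q}/\|f_{z_*}\|_{L^p}$ grows like $K_\Omega(z_*,z_*)^{\frac1p-\frac1q-\alpha}$; since $K_\Omega(z_*,z_*)\to\infty$ as $z_*\to b\Omega$, finiteness of the operator norm forces $\alpha\ge\frac1p-\frac1q$. Concretely, pass to $\Omega':=\Phi_{z_*}(\Omega)$, $z_*':=\Phi_{z_*}(z_*)$, and let $P:=P_\lambda(z_*')$: by hypothesis $P$ is a genuine Euclidean polydisc contained in $\Omega'$ with polyradii $\lambda/b_j(z_*',z_*')$, and sharp $\B$-type together with the invariance~\eqref{eqn:invariance} (whose Jacobian is uniformly nonsingular) gives $|P|\simeq\prod_j b_j(z_*',z_*')^{-2}\simeq K_{\Omega'}(z_*',z_*')^{-1}\simeq K_\Omega(z_*,z_*)^{-1}$.

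The natural test function is $\tilde f(w'):=\mathbf{1}_P(w')\,\overline{K_{\Omega'}(w',z_*')}$ (and its pullback to $\Omega$ via $\Phi_{z_*}$). Cauchy--Schwarz $|K_{\Omega'}(z_*',w')|^{2}\le K_{\Omega'}(z_*',z_*')K_{\Omega'}(w',w')$ combined with the hypothesis $K_{\Omega'}(w',w')\le CK_{\Omega'}(z_*',z_*')$ on $P$ gives $|K_{\Omega'}(z_*',w')|\le CK_{\Omega'}(z_*',z_*')$ pointwise on $P$, so $\|\tilde f\|_{L^p(P)}\le CK_{\Omega'}(z_*',z_*')^{1-\frac1p}$. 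Evaluating the operator at the centre $z_*'$ yields the real positive quantity
\[
\tilde T_\psi\tilde f(z_*')=\int_P|K_{\Omega'}(z_*',w')|^{2}K_{\Omega'}(w',w')^{-\alpha}\,dw'\gtrsim K_{\Omega'}(z_*',z_*')^{-\alpha}\int_P|K_{\Omega'}(z_*',w')|^{2}\,dw',
\]
and the key step is the sub-mean-value inequality applied to the holomorphic function $w'\mapsto K_{\Omega'}(w',z_*')$ on the Euclidean polydisc $P\subset\Omega'$, namely $K_{\Omega'}(z_*',z_*')^{2}\le|P|^{-1}\int_P|K_{\Omega'}(w',z_*')|^{2}\,dw'$, which together with $|P|\simeq K_{\Omega'}(z_*',z_*')^{-1}$ forces the integral to be $\gtrsim K_{\Omega'}(z_*',z_*')$ and hence $\tilde T_\psi\tilde f(z_*')\gtrsim K_{\Omega'}(z_*',z_*')^{1-\alpha}$.

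Since $z\mapsto\tilde T_\psi\tilde f(z)$ is holomorphic, $|\tilde T_\psi\tilde f|^{q}$ is subharmonic; the mean-value inequality over $P$ gives $\|\tilde T_\psi\tilde f\|_{L^q(P)}\gtrsim|P|^{1/q}\,|\tilde T_\psi\tilde f(z_*')|\gtrsim K_{\Omega'}(z_*',z_*')^{1-\alpha-\frac1q}$, and dividing by the $\|\tilde f\|_{L^p(P)}$-bound and pulling back to $\Omega$ via~\eqref{eqn:invariance} (bounded Jacobian preserves all norms up to uniform constants) produces the blow-up rate $\|T_\psi\|_{L^p(\Omega)\to L^q(\Omega)}\gtrsim K_\Omega(z_*,z_*)^{\frac1p-\frac1q-\alpha}$ as $z_*\to b\Omega$. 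The main delicate step is the sub-mean-value lower bound on $\int_P|K_{\Omega'}(z_*',\cdot)|^{2}$: it relies essentially on the extra hypothesis that $P_\lambda(z_*')\subset\Omega'$ is a genuine Euclidean polydisc (so that mean-value for holomorphic functions applies) together with the sharp $\B$-type lower bound, which matches $|P|\simeq K_{\Omega'}(z_*',z_*')^{-1}$ and thereby makes the mean-value lower bound sharp; everything else reduces to Cauchy--Schwarz upper bounds and routine invariance bookkeeping.
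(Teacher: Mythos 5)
Your argument is correct in substance, and it exploits the hypothesis $T_{K_\Omega^{-\alpha}}:L^p\to L^q$ in a genuinely different way from the paper. The lower-bound mechanism is the same in both proofs: restrict to $P_\lambda(z')$, use $K_{\Omega'}(w',w')\le CK_{\Omega'}(z',z')$ there, apply the sub-mean-value property, and use $\mathrm{Vol}(P_\lambda(z'))\approx K_{\Omega'}(z',z')^{-1}$ (sharp diagonal estimate) to get $\int_{P_\lambda(z')}|K_{\Omega'}(z',w')|^2K_{\Omega'}(w',w')^{-\alpha}dw'\gtrsim K_{\Omega'}(z',z')^{1-\alpha}$. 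The difference is the matching upper bound: the paper writes $\int_\Omega|K_\Omega(w,z)|^2K_\Omega(w,w)^{-\alpha}dw=\int_\Omega\big(T_{K_\Omega^{-\alpha}}K_\Omega(\cdot,z)\big)(\xi)K_\Omega(z,\xi)d\xi$, applies H\"older with exponents $q,q'$, and then controls $\no{K_\Omega(\cdot,z)}_{L^p}$ and $\no{K_\Omega(\cdot,z)}_{L^{q'}}$ via Proposition~\ref{prop:main} with $(a,b)=(p,0)$ and $(q',0)$, which requires the off-diagonal $\B$-type estimate. You instead truncate the test function to the polydisc, bound its $L^p$ norm by the elementary reproducing-kernel Cauchy--Schwarz inequality $|K_{\Omega'}(z',w')|^2\le K_{\Omega'}(z',z')K_{\Omega'}(w',w')\lesssim K_{\Omega'}(z',z')^2$ on $P_\lambda(z')$, and convert the value of the image at the centre into an $L^q(P_\lambda(z'))$ lower bound by a second sub-mean-value argument applied to the plurisubharmonic function $|T\tilde f|^q$. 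This buys you a more self-contained proof (no duality step, no Proposition~\ref{prop:main}, indeed no use of the off-diagonal upper bound at all --- only the sharp diagonal estimate and the polydisc hypothesis), and it yields the quantitative blow-up rate $\no{T_{K_\Omega^{-\alpha}}}_{L^p\to L^q}\gtrsim K_\Omega(z_*,z_*)^{\frac1p-\frac1q-\alpha}$ of local operator norms, which the paper's comparison of two integrals does not state explicitly.

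Three small points to tidy up. First, the test function should be $\tilde f(w')=\mathbf{1}_{P}(w')\,K_{\Omega'}(w',z_*')=\mathbf{1}_{P}(w')\,\overline{K_{\Omega'}(z_*',w')}$; as written, $\overline{K_{\Omega'}(w',z_*')}=K_{\Omega'}(z_*',w')$ is antiholomorphic in $w'$ and the evaluation at the centre would be $\int_P K_{\Omega'}(z_*',w')^2K_{\Omega'}(w',w')^{-\alpha}dw'$ rather than the modulus-squared integral you display; the fix is purely notational. Second, the step $K_{\Omega'}(w',w')^{-\alpha}\gtrsim K_{\Omega'}(z_*',z_*')^{-\alpha}$ on $P_\lambda(z')$ uses $\alpha\ge0$; you should state the reduction to $\alpha>0$ (or $\alpha\ge 0$) at the outset, exactly as the paper does. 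Third, the ``routine invariance bookkeeping'' does need one precise choice: transporting $\tilde f$ by naive composition with $\Phi_{z_*}$ inserts the $w$-dependent phase $\overline{\det J_\C\Phi_{z_*}(w)}$ into the integrand, so the evaluation at the centre is no longer a nonnegative integral; one should instead take $f=(\tilde f\circ\Phi_{z_*})\cdot\big(\overline{\det J_\C\Phi_{z_*}}\big)^{-1}|\det J_\C\Phi_{z_*}|^{2\alpha+2}$ (equivalently, transfer the operator: boundedness of $T_{K_\Omega^{-\alpha}}$ on $\Omega$ gives boundedness of $T_{K_{\Omega'}^{-\alpha}}$ on $\Omega'$ with comparable norm). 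Since the Jacobian is uniformly nonsingular, all norms are then comparable with constants independent of $z_*$, and your conclusion follows as stated.
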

 \begin{proof} We may assume $\alpha >0$. The proof is based on upper and lower estimates of $\int_\Om |K_{\Om}(w,z)|^2K_{\Om}(w,w)^{-\alpha}dw$ for $z$ approaching to the boundary. \\
 	
We first give the upper bound of $\int_\Om |K_{\Om}(w,z)|^2K_{\Om}(w,w)^{-\alpha}dw$ by using the sharp $\B$-type condition and the assumption  $T_{K_{\Om}^{-\alpha}}:L^p(\Om)\to L^q(\Om)$ continuously.
Since  $K_{\Om}(w,z)$ is holomorphic in $w\in \Om$, 
 	$$K_{\Om}(w,z)=P(K_{\Om}(\cdot,z))(w)=\int_{\Om}K_{\Om}(w,\xi)K_{\Om}(\xi,z)d\xi$$
 	and hence 
 	$$\overline{K_{\Om}(w,z)}=\overline{\int_{\Om}K_{\Om}(w,\xi)K_{\Om}(\xi,z)d\xi}=\int_{\Om}K_{\Om}(\xi,w)K_{\Om}(z,\xi)d\xi.$$  
 	Thus, we have 
 		\begin{equation}\label{eqn:Ksquare1}\begin{split}
 		\intop_{\Omega}\left|K_{\Om}\left(w,z\right)\right|^{2}K_{\Om}(w,w)^{-\alpha}dw 
 		= & \intop_{\Omega}K_{\Om}(w,z)K_{\Om}(w,w)^{-\alpha}\left(\int_{\Om}K_{\Om}(\xi,w)K_{\Om}(z,\xi)d\xi\right)dw\\
 		= & \intop_{\Om}\left(\intop_{\Om}K_{\Om}(\xi,w)K_{\Om}(w,z)K_{\Om}(w,w)^{-\alpha}dw\right)K_{\Om}(z,\xi)d\xi\\
 		=&\intop_{\Om}\Big(T_{K_\Om^{-\alpha}} (K_{\Om}(\cdot,z)))(\xi) \Big)K_{\Om}(z,\xi)d\xi\\
 		\le&\no{T_{K_\Om^{-\alpha}}(K_{\Om}(\cdot,z))}_{L^q(\Om)}\no{K_{\Om}(z,\cdot)}_{L^{q'}(\Om)}, 
  		\end{split}	\end{equation}
where the last inequality follows from H\"older's inequality and $\frac{1}{q'}+\frac{1}{q}=1$. 		
Since $T_{K_\Om^{-\alpha}}$ maps from $L^p(\Om)$ to $L^q(\Om)$ continuously then  \eqref{eqn:Ksquare1} continues as   
	\begin{equation}\label{eqn:Ksquare2}\begin{split}
 	\intop_{\Omega}\left|K_{\Om}\left(w,z\right)\right|^{2}K_{\Om}(w,w)^{-\alpha}dw \le& C\no{K_{\Om}(\cdot,z)}_{L^p(\Om)}\no{K_{\Om}(\cdot,z)}_{L^{q'}(\Om)}\\
 	\le &C \left(K_{\Om}(z,z)^{p-1}\right)^{\frac{1}{p}}\left(K_{\Om}(z,z)^{q'-1}\right)^{\frac{1}{q'}}=CK_{\Om}(z,z)^{1-\frac{1}{p}+\frac{1}{q}}.
 		\end{split}	\end{equation}
Here, the second inequality follows by using Proposition~\ref{prop:main} twice for $(a,b)=(p,0)$ and $(a,b)=(q',0)$. \\

In order to get the lower bound of $\int_\Om |K_{\Om}(w,z)|^2K_{\Om}(w,w)^{-\alpha}dw$, we first use the invariant formula \eqref{eqn:invariance} and $|\det J_\C\Phi_z(w)|\ge C$ uniformly to get 
 $$\int_\Om |K_{\Om}(w,z)|^2K_{\Om}(w,w)^{-\alpha}dw\ge C \int_{\Om'} |K_{\Om'}(w',z')|^2K_{\Om'}(w',w')^{-\alpha}dw'.$$
By the hypothesis: there exists $\lambda>0$ such that, if $w'\in P_{\lambda}(z')$ then $w'\in \Om'$ and $K_{\Om'}(w',w')\lesssim K_{\Om'}(z',z')$, it follows
\begin{equation}\begin{split}
\int_{\Om'} |K_{\Om'}(w',z')|^2K_{\Om'}(w',w')^{-\alpha}dw'\ge&  \int_{P_\lambda(z')} |K_{\Om'}(w',z')|^2K_{\Om'}(w',w')^{-\alpha}dw'\\
\gtrsim &K_{\Om'}(z',z')^{-\alpha}\int_{P_\lambda(z')} |K_{\Om'}(w',z')|^2dw'\\
\gtrsim &K_{\Om'}(z',z')^{-\alpha}K_{\Om'}(z',z')^{2}\T{Vol}(P_\lambda(z'))\\
\gtrsim &K_{\Om'}(z',z')^{1-\alpha}.
\end{split}
\end{equation}
Here the third inequality follows by  the sub-mean property and the last follows by $$\T{Vol}(P_\lambda(z'))=\pi^n\lambda^{2n}\left(\prod_{j=1}^n b_{j}(z',z')\right)^{-2}\approx (K_{\Om'}(z',z'))^{-1}.$$	
Since $K_{\Om}(z,z)=|\det J_\C\Phi_z(z)|^2K_{\Om'}(z',z')$ (by the invariant formula) and $|\det J_\C\Phi_z(z)|\approx 1$,  we get the lower bound 
\begin{eqnarray}
\label{eqn:est 3}
\intop_{\Om}|K_{\Om}(w,z)|^2K_{\Om}(w,w)^{-\alpha}dw\ge C K_{\Om}(z,z)^{1-\alpha}.
\end{eqnarray}

 Therefore, by \eqref{eqn:est 3} and \eqref{eqn:Ksquare2}, we get
 \begin{equation} 
  	K_{\Om}^{1-\frac{1}{p}+\frac{1}{q}}(z,z) \ge CK_{\Om}(z,z)^{1-\alpha},  \end{equation}
  and hence $K_{\Om}(z,z)^{\alpha-\frac{1}{p}+\frac{1}{q}}\ge C$.  Letting $z\to b\Om$, we obtain 
  $\alpha\ge \frac{1}{p}-\frac{1}{q}$ since $K_\Om(z,z)\to \infty$ as $z\to b\Om$. This proves Theorem~\ref{thm:main2}.
 \end{proof}

 \vspace{10pt}
 
 \section{Proof of Theorem~\ref{thm:main S1}} \label{S4}
 As mentioned in \S\ref{S1}, the proof of Theorem~\ref{thm:main S1} is complete if we can verify all domains listed in Theorem~\ref{thm:main S1} satisfy the hypothesis of Theorem~\ref{thm:main1} and \ref{thm:main2}. The main goal of this section is the following theorem. 
 \begin{theorem}\label{thm:main3}
 	Let $\Om$ be a bounded, pseudoconvex domain in $\C^n$ with smooth boundary. Assume that $\Om$ satisfies one of the following settings:
 	\begin{enumerate}
 		\item[(a)] $\Om$ is a strongly pseudoconvex domain;
 		\item[(b)] $\Om$ is a pseudoconvex domain of finite type and $n=2$;
 		\item[(c)] $\Om$ is a convex domain of finite type;
 		\item[(d)] $\Om$ is a decoupled domain of finite type;
 		\item[(e)] $\Om$ is a pseudoconvex domain of finite type whose Levi-form has only one degenerate eigenvalue or comparable eigenvalues.  
 	\end{enumerate} 
 	Then, the Bergman kernel $K_\Om\in C^\infty(\overline \Om\times\overline \Om\setminus \Delta_b)$ and for each point $z$ near the boundary, there is a biholomorphism $\Phi_z$ depending on $z$  but with holomorphic Jacobian uniformly nonsingular  and a $\B$-system associated to $z':=\Phi_z(z)$ such that the Bergman kernel $K_{\Om'}$ associated to $\Om':=\Phi_z(\Om)$ has the following properties:
 	\begin{enumerate}	\item 	$\left|K_{\Om'}(z',w')\right|\les \prod_{j=1}^{n}b^{2}_{j}(z',w')$, 
 		for $w'\in \Omega\cap \BB(z',c)$. Here $c$ is a universal constant.
		\item $K_{\Om'}(z',z')\approx  \prod_{j=1}^{n}b^2_{j}(z',z')$.
 		\item There exist universal constants  $\lambda$ and $C$ such that the $\B$-polysdisc $P_{\lambda}(z')\Subset \Om'$ and $K_{\Om'}(w',w')\le C K_{\Om'}(z',z')$, for any $w'\in P_{\lambda}(z')$. 
 	\end{enumerate}
\end{theorem}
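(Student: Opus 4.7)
The plan is to verify, case by case, that each of the five classes (a)--(e) fits into the abstract framework of Sections 2 and 3, by invoking the structural Bergman kernel estimates available in the literature for each class. The common scheme is: for a boundary point $z_0$ and a nearby $z$, introduce a biholomorphism $\Phi_z$ (an affine map or a Cho-type polynomial change of coordinates) that flattens $b\Om$ to a standard model near $z$, extract from the literature both the upper off-diagonal estimate $|K_{\Om'}(z',w')|\lesssim \prod_j b_j^2(z',w')$ and a matching diagonal lower bound $K_{\Om'}(z',z')\gtrsim \prod_j b_j^2(z',z')$, and finally check the containment $P_\lambda(z')\Subset \Om'$ with the doubling property of $K_{\Om'}$ on $P_\lambda(z')$. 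The smoothness $K_\Om\in C^\infty(\overline\Om\times\overline\Om\setminus\Delta_b)$ in each class follows from local regularity of the $\dbar$-Neumann problem.

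Concretely, for case (a) I would take $\Phi_z$ to be a translation (no normalization is needed since strongly pseudoconvex domains are uniformly modeled on the ball); the exponents in the $\B$-system are all $m=2$, Fefferman's asymptotic expansion \cite{Fef74} gives (1)--(2), and the polydisc $P_\lambda(z')$ is essentially an anisotropic box comparable to $\delta(z)\times\sqrt{\delta(z)}^{\,2(n-1)}$, which clearly lies inside $\Om'$ for $\lambda$ small by strong pseudoconvexity; doubling of $K_{\Om'}$ on $P_\lambda(z')$ is immediate from $K\sim|r|^{-(n+1)}$. For case (b) I would invoke Nagel--Rosay--Stein--Wainger \cite{NaRoStWa89} where the $\B$-system uses the single multitype exponent $m$ equal to the type at $z_0$, and the coefficients $A_{2k}(z)$ come from the coefficients of the Levi polynomial expansion; the polydisc property is standard in this setting. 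For case (c), McNeal--Stein \cite{McNSt94} and McNeal \cite{McN94} give precisely the sharp $\B$-type kernel estimates with $A_{jk}(z)$ built from the distance $\tau(z,e_j,\cdot)$ along complex directions, and the polydisc $P_\lambda(z')$ is essentially McNeal's polydisc which sits inside $\Om'$ after passing through the Minkowski-type normalization.

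For cases (d) and (e) I would use the work of McNeal \cite{McN91} and Cho \cite{Cho96, Cho02}. In the decoupled case one chooses separate one-variable polynomial normalizations in each coordinate, so the $\B$-system decouples into a product of one-variable $\B$-systems, and the kernel splits roughly as a product whose factors are controlled by the (proven) finite type $\C^2$ estimates. In case (e) Cho constructs explicit polynomial coordinates under which the defining function has a ``nice form'' and proves sharp two-sided estimates of the Bergman kernel; in these coordinates $P_\lambda(z')$ coincides up to constants with Catlin's pseudodistance polydisc, which is well-known to satisfy $P_\lambda(z')\Subset \Om'$ for small universal $\lambda$ (and that doubling of $K$ along $P_\lambda$ holds, by construction of the $A_{jk}$).

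The main obstacle, in my view, lies in the uniformity of the biholomorphism $\Phi_z$ across boundary points: one needs $|\det J_\C\Phi_z|\approx 1$ uniformly in $z$, and one needs the same $\B$-system structure (with the same universal constants $\lambda, C$) valid for every $z$ close to $b\Om$. Checking this requires carefully reading the constructions in \cite{McNSt94, McN91, Cho96, Cho02} and tracking that the polynomial changes of variables they use are of bounded degree with uniformly bounded coefficients, and that the functions $A_{jk}(z)$ are uniformly bounded. Once this uniformity is in hand, the doubling $K_{\Om'}(w',w')\lesssim K_{\Om'}(z',z')$ on $P_\lambda(z')$ follows from the fact that $b_j(w',w')\approx b_j(z',z')$ on the polydisc (which is how $P_\lambda$ is defined), combined with the already-verified sharp two-sided bound $K_{\Om'}(\zeta,\zeta)\approx \prod_j b_j^2(\zeta,\zeta)$ applied at both $w'$ and $z'$. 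With these verifications done for the five classes, parts (1)--(3) of Theorem~\ref{thm:main3} are established, and Theorem~\ref{thm:main S1} follows directly from Theorems~\ref{thm:main1} and \ref{thm:main2}.
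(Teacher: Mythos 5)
Your overall route is the same as the paper's: conclusions (1)--(2) and the smoothness $K_\Om\in C^\infty(\overline\Om\times\overline\Om\setminus\Delta_b)$ are obtained by citing Fefferman, Nagel--Rosay--Stein--Wainger, Catlin, McNeal, Cho and Kerzman--Boas, so the only part that requires an actual argument is item (3) — and that is precisely where your proposal has a genuine gap. You justify the doubling $K_{\Om'}(w',w')\le C\,K_{\Om'}(z',z')$ on $P_\lambda(z')$ by the claim that $b_j(w',w')\approx b_j(z',z')$ on the polydisc, ``which is how $P_\lambda$ is defined''. It is not: $P_\lambda(z')$ is defined by $|w_j'-z_j'|\,b_j(z',z')\le\lambda$, i.e. only through the values of the $\B$-system at the center. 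The comparability of $b_j(w',w')$ with $b_j(z',z')$ for $w'\in P_\lambda(z')$ requires both $|r_{\Om'}(w')|\approx|r_{\Om'}(z')|$ and control of $A_{jk}(w')$ in terms of the data at $z'$; this is exactly the nontrivial engulfing/doubling geometry of the polydiscs, so as written your deduction of (3) is circular. The containment $P_\lambda(z')\Subset\Om'$ is likewise asserted as ``clear'' or ``well-known'' rather than proved, and your ``main obstacle'' paragraph (uniform boundedness of the $A_{jk}$ and of $\det J_\C\Phi_z$) does not identify the missing comparability statement.

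The paper closes this gap with two concrete inputs you would need to supply. First, taking $A_{jk}(z')=\sum_{k_1+k_2=k,\,k_1,k_2>0}\bigl|\di^k r_{\Om'}(z')/\di z_j'^{k_1}\di\bar z_j'^{k_2}\bigr|$, a Taylor expansion of $r_{\Om'}$ together with the defining inequalities of $P_\lambda(z')$ (which give $|w_1'-z_1'|\le 2\lambda|r_{\Om'}(z')|$ and $A_{jk}(z')|w_j'-z_j'|^k\le 2\lambda^k|r_{\Om'}(z')|$) yields $|r_{\Om'}(w')-r_{\Om'}(z')|\le C\lambda|r_{\Om'}(z')|$, hence $r_{\Om'}(w')\approx r_{\Om'}(z')<0$ and $P_\lambda(z')\Subset\Om'$ for $\lambda$ small. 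Second, the polydisc-engulfing results of McNeal (Proposition~2.4 of \cite{McN94}, Proposition~4 of \cite{McN94b}, and their analogues for the other classes) give $P_{c\lambda}(z')\subset P_{\tilde\lambda}(w')$ for $w'\in P_\lambda(z')$, which combined with $K_{\Om'}(\zeta,\zeta)\approx\prod_j b_j^2(\zeta,\zeta)\approx\bigl(\T{Vol}(P_\lambda(\zeta))\bigr)^{-1}$ yields $K_{\Om'}(w',w')\le C\,K_{\Om'}(z',z')$. With these two steps (or equivalent ones) inserted, your case-by-case verification matches the paper's proof.
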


\begin{proof} Inspired by the work of Kerzman \cite{Ker72}, Boas \cite{Boa87b} proves that the Bergman kernel associated to smooth, bounded, pseudoconvex domains of finite type is smooth up to off-diagonal boundary, i.e, $K_{\Om}\in C^\infty(\bar \Om\times\bar \Om\setminus \Delta_b)$. Thus, we only need to verify the local properties after the bihomomorphic mapping $\Phi_z$. \\
	
	The details of the proof of conclusion (1)-(2) can be found in paper by McNeal \cite{McN94, McN94b}, Cho \cite{Cho96, Cho02}, Catlin \cite{Cat89}. For example, the proof of the upper-bound estimate of the Bergman kernel, i.e. (1), has been established on strongly pseudoconvex domains \cite{Fef74}, pseudoconvex domains of finite type in $\C^2$ \cite[Theorem~3.1]{NaRoStWa89}, convex domains of finite type  \cite[Theorem~5.2]{McN94}, decoupled domains \cite[Theorem~2]{McN91}, and pseudoconvex domains of finite type whose Levi-form has only one degenerate eigenvalue or comparable eigenvalues \cite[Theorem~1]{Cho96} and \cite[Theorem~1.1]{Cho02}. Moreover, the sharp estimate of Bergman kernel on the diagonal 
	has been shown on strongly pseudoconvex domains \cite{Fef74}, pseudoconvex domains of finite type in $\C^2$ \cite[Theorem 2]{Cat89}, convex domains of finite type  \cite[Theorem 3.4 and Theorem 5.2]{McN94}, decoupled domains \cite[Theorem 2]{McN91}, and pseudoconvex domains of finite type whose Levi-form has only one degenerate eigenvalue or comparable eigenvalues \cite[Theorem~1]{Cho94}. The proof of (3) can be given as follows.\\
	
	By the characteristic of domains listed in this theorem, one can construct a biholomorphism $\Phi_z$ associated to a given point $z$ near the boundary of $\Om$ such that the conclusion (1) and (2) hold for the $\B$-system $\{b_j(z',\cdot)\}_{j=1}^n$ associated to $z'=\Phi_z(z)$ defined as in \S\ref{S2} by
	$$A_{jk}(z')=\sum_{k_1+k_2=k,k_1,k_2>0}\left|\frac{\di^k r_{\Om'}(z')}{{\di {z'}}_j^{k_1}\di \bar {z'}_j^{k_2}} \right|.$$
	Here $r_{\Om'}=r_{\Om}\circ\Phi_z^{-1}$. The construction of $\Phi$ hinging on the nice geometric properties on these domains also give us 
	 $$\sum_{i,j=2}^n\sum_{k=2}^m\sum_{k_1+k_2=k,k_1,k_2>0}\left|\frac{\di^k r_{\Om'}(z')}{\di {z'}_i^{k_i}\di \bar {z'}_j^{k_j}} \right||w'_i-z'_i|^{k_i}|w'_j-z'_j|^{k_j}\le C \sum_{j=2}^n\sum_{k=2}^mA_{jk}(z')|w'_j-z'_j|^k,$$ 
	 for $w'\in \BB(z',c)$.
	 Thus, by the Taylor expansion, we have 
		\begin{eqnarray}\label{eqn:Taylor}|r_{\Om'}(w')-r_{\Om'}(z')|\le C  \left(|w'_1-z'_1|+\sum_{j=2}^n\sum_{k=2}^mA_{jk}(z')|w'_j-z'_j|^k\right),
			\end{eqnarray}
for $w'\in \BB(z',c)$, for a sufficiently small $c>0$.
Thus, if we restrict $w'\in P_\lambda(z')$, then $(2|r_{\Om'}(z')|)^{-1}|w'_1-z'_1|\le \lambda$ and for each $j=2,\dots,n$; $k=2,\dots,m$,
	$$\sum_{k=2}^m\left(\frac{A_{jk}(z')}{2|r_{\Om'}(z')|}\right)^{\frac{1}{k}}|w'_j-z'_j|\le b_j(z',z')|w'_j-z'_j|\le \lambda.$$
	As a consequence,   
	\begin{eqnarray}\label{eqn:P lambda}
|w_1'-z_1'|\le 2\lambda |r_{\Om'}(z')|\quad \T{and} \quad A_{jk}(z')|w_j'-z_j'|^k\le 2\lambda^k |r_{\Om'}(z')|,
	\end{eqnarray}
	for all $w'\in P_\lambda(z')$.
	
	Combining \eqref{eqn:Taylor} and \eqref{eqn:P lambda}, we get 
	$$|r_{\Om'}(w')-r_{\Om'}(z')|\le C\lambda |r_{\Om'}(z')|, $$
	and hence $r_\Om(w)\approx r_\Om(z)$ for a sufficient small $\lambda>0$. 
This means $r_{\Om}(w)<0$. Thus,  $P_{\lambda}(z')\Subset  \Om'$ for some small $\lambda$.

Using the nice geometric properties of these domains, McNeal also proves  on convex domains of finite type \cite[Proposition 2.4]{McN94} and similarly for  other domains listed in this theorem (see \cite[Proposition 4]{McN94b} for example) that if $w'\in P_\lambda(z')$ then there exists $c$ and $\tilde\lambda$ such that $P_{c\lambda}(z')\subset P_{\tilde\lambda'}(w')$. Since $K_{\Om'}(z',z')\approx \T{Vol}(P_{\lambda}(z'))$, we have 
$$K_{\Om'}(w',w')\le CK_{\Om'}(z',z'),\quad \T{if }w'\in P_{\lambda}(z').$$ 
This completes our verification. 
\end{proof}

The following corollary follows immediately from Theorem~\ref{thm:main S1}, that is a refined result by McNeal \cite{McN94b}, Phong and Stein \cite{PhSt77} for the $L^p$ estimates of the Bergman projection $P$ and also generalises the works by Zhu \cite{Zhu06} and Zhao \cite{Zha15} for the upper bound of the norm $\no{P}_{L^p(\Om)\to L^p(\Om)}.$  
 \begin{corollary}
	Let $\Om$ be a bounded, pseudoconvex domain in $\C^n$ with smooth boundary. Assume that $\Om$ satisfies at least one of the following settings:
	\begin{enumerate}
		\item[(a)] $\Om$ is a strongly pseudoconvex domain;
		\item[(b)] $\Om$ is a pseudoconvex domain of finite type and $n=2$;
		\item[(c)] $\Om$ is a convex domain of finite type;
		\item[(d)] $\Om$ is a decoupled domain of finite type;
		\item[(e)] $\Om$ is a pseudoconvex domain of finite type whose Levi-form has only one degenerate eigenvalue or comparable eigenvalues.  
	\end{enumerate} 
	Then the Bergman projection $P$ is self $L^p$-bounded for $1<p<\infty$, with the upper bound 
		$$\no{P}_{L^p(\Om)\to L^p(\Om)}\le C \frac{p^2}{p-1}.$$
		Furthermore, the self $L^p$ boundedness of $P$ is sharp in the sense that there is no $\tilde p>p$ such that $P$ is bounded from $L^p(\Om)$ to $L^{\tilde p}(\Om)$.
\end{corollary}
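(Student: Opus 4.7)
The plan is to view the Bergman projection $P$ as the Bergman--Toeplitz operator $T_\psi$ with constant symbol $\psi\equiv 1$, which coincides with the template $\psi=K_\Om^{-\alpha}$ at $\alpha=0$. Since the domains listed in this corollary are precisely those treated by Theorem~\ref{thm:main S1}, both conclusions will follow by specializing that theorem at $\alpha=0$, with no new estimates required.

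For the norm upper bound, I would invoke part (1) of Theorem~\ref{thm:main S1} with $p=q$ and $\alpha=0$. The hypothesis $\alpha\ge\frac{1}{p}-\frac{1}{q}$ becomes $0\ge 0$ and is trivially satisfied, so the theorem supplies
\[
\no{P}_{L^p(\Om)\to L^p(\Om)} \le C_\Om\Bigl(\tfrac{p}{p-1}+p\Bigr)^{1-\frac{1}{p}+\frac{1}{p}} = C_\Om\Bigl(\tfrac{p}{p-1}+p\Bigr).
\]
The elementary identity $\frac{p}{p-1}+p=\frac{p^2}{p-1}$ then delivers exactly the claimed bound.

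For the sharpness assertion, I would argue by contradiction using the ``only if'' direction of Theorem~\ref{thm:main S1}(1). Suppose $P:L^p(\Om)\to L^{\tilde p}(\Om)$ were continuous for some finite $\tilde p>p>1$. Writing $P=T_{K_\Om^{-\alpha}}$ with $\alpha=0$, the iff clause forces $0\ge\frac{1}{p}-\frac{1}{\tilde p}$, i.e.\ $\tilde p\le p$, a contradiction. The remaining case $\tilde p=\infty$ reduces to the previous one via the continuous inclusion $L^\infty(\Om)\hookrightarrow L^{q}(\Om)$ on the bounded domain $\Om$ for any finite $q>p$.

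The main (and essentially the only) delicate point is recognizing the algebraic collapse $\frac{p}{p-1}+p=\frac{p^2}{p-1}$ inside the norm estimate inherited from Theorem~\ref{thm:main S1}. All the substantive work, namely verifying the sharp $\mathcal{B}$-type condition and the polydisc hypothesis of Theorem~\ref{thm:main2} for each of the five domain classes, has already been executed in Theorem~\ref{thm:main3}, so no new obstacle arises at this stage.
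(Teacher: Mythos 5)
Your proposal is correct and matches the paper's route: the paper also obtains this corollary by specializing Theorem~\ref{thm:main S1} to the symbol $\psi\equiv 1=K_\Om^{0}$, taking $q=p$ for the bound $C\bigl(\tfrac{p}{p-1}+p\bigr)=C\tfrac{p^2}{p-1}$ and using the ``only if'' direction to rule out any gain $L^p\to L^{\tilde p}$ with $\tilde p>p$. Your extra remark reducing the case $\tilde p=\infty$ to a finite exponent via $L^\infty(\Om)\hookrightarrow L^q(\Om)$ on the bounded domain is a harmless completion of the same argument.
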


\vspace{10pt}

 \section{A generalised  version of Schur's test} \label{S5}
 In this section, we introduce a generalised version of Schur's test that is an important tool of studying the $L^p\T-L^q$ estimates for Toeplitz operators.  We believe our generalised Schur's test is of some independent interest as well. 
 
 \begin{theorem}\label{thm:Schur's test} Let $\left(X,\mu\right)$, $\left(Y,\nu\right)$ be measure spaces
 	with $\sigma$-finite, positive measures; let $1<p\le q<\infty $ and $\alpha\in \R$.  Let $K:X\times Y\rightarrow\mathbb{C}$ and $\psi:Y\rightarrow\mathbb{C}$ be measurable functions. Assume that there exist positive measurable  functions $h_1$, $h_2$ on $Y$ and $g$ on $X$ such that  $$h^{-1}_1h_2\psi\in L^\infty(Y,d\nu)$$ and  the inequalities
 	\begin{eqnarray}
 	\int_{Y}\left|K(x,y)\right|^{\alpha p'}h_{1}(y)^{p'}d\nu(y)  &\leq & C_{1}g(x)^{p'},\label{1}\\
 	\int_{X}\left|K(x,y)\right|^{(1-\alpha)q}g(x)^qd\mu(x)&\leq & C_2 h_2(y)^q, \label{2}
 	\end{eqnarray}
 	hold for almost every $x\in\left(X,\mu\right)$ and $y\in\left(Y,\nu\right)$, where $\frac{1}{p}+\frac{1}{p'}=1$ and $C_1, C_2$ are positive constants. 
 	
 	Then, the Toeplitz operator $T_{\psi}$ associated to the kernel $K$ and the symbol $\psi$ defined by
 	\[
 	\left({T}_{\psi}u\right)(x):=\int_{Y}K\left(x,y\right)u\left(y\right)\psi\left(y\right)d\nu\left(y\right),
 	\] is bounded from $L^p(Y,\nu)$ into $L^q(X,\mu)$. Furthermore, 
 	$$\no{T_\psi}_{L^p(Y,\nu)\to L^q(X,\mu)}\le C_1^{\frac{p-1}{p}}C_2^{\frac{1}{q}}\no{h_1^{-1}h_2\psi}_{L^\infty(Y,\nu)}.$$ 
 \end{theorem}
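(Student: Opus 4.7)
The plan is a two–step Hölder/Minkowski argument that generalises the symmetric Schur test to the $p\leq q$ setting, with the role of the symbol $\psi$ absorbed at the very end through the $L^\infty$ hypothesis on $h_1^{-1}h_2\psi$. I expect hypothesis \eqref{1} to be consumed by Hölder in the inner ($y$-)integration and hypothesis \eqref{2} by Minkowski's integral inequality in the outer ($x$-)integration; the non-trivial point is that the exponents match up correctly because $q/p\geq 1$.

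\medskip

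\textbf{Step 1 (split and apply Hölder in $y$).} Writing $|K|=|K|^{\alpha}\cdot|K|^{1-\alpha}$ and inserting/cancelling $h_1(y)$, I apply Hölder on $Y$ with conjugate exponents $p'$ and $p$ to obtain, pointwise in $x$,
\[
\bigl|(T_\psi u)(x)\bigr|\leq \Bigl(\int_Y |K(x,y)|^{\alpha p'}h_1(y)^{p'}\,d\nu(y)\Bigr)^{1/p'}\!\Bigl(\int_Y |K(x,y)|^{(1-\alpha)p}h_1(y)^{-p}|\psi(y)|^p|u(y)|^p\,d\nu(y)\Bigr)^{1/p}.
\]
Hypothesis \eqref{1} bounds the first factor by $C_1^{1/p'}g(x)$, so
\[
\bigl|(T_\psi u)(x)\bigr|^p\leq C_1^{p/p'}g(x)^p\int_Y |K(x,y)|^{(1-\alpha)p}h_1(y)^{-p}|\psi(y)|^p|u(y)|^p\,d\nu(y).
\]

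\textbf{Step 2 ($L^{q/p}$-norm in $x$ via Minkowski).} Raising to the power $q/p\geq 1$ and integrating in $x$ gives
\[
\|T_\psi u\|_{L^q(X)}^{p}\leq C_1^{p/p'}\Bigl\|\,g(x)^p\!\int_Y |K(x,y)|^{(1-\alpha)p}h_1(y)^{-p}|\psi(y)|^p|u(y)|^p\,d\nu(y)\Bigr\|_{L^{q/p}_{x}}.
\]
Moving $g(x)^p$ inside and applying Minkowski's integral inequality for $L^{q/p}(X)$ (this is the step which forces $q\geq p$), the right-hand side is at most
\[
C_1^{p/p'}\int_Y h_1(y)^{-p}|\psi(y)|^p|u(y)|^p\Bigl(\int_X g(x)^q|K(x,y)|^{(1-\alpha)q}\,d\mu(x)\Bigr)^{p/q} d\nu(y).
\]
Hypothesis \eqref{2} converts the inner $x$-integral into $C_2^{p/q}h_2(y)^p$, giving
\[
\|T_\psi u\|_{L^q(X)}^{p}\leq C_1^{p/p'}C_2^{p/q}\!\int_Y\bigl(h_1(y)^{-1}h_2(y)\psi(y)\bigr)^{p}|u(y)|^p\,d\nu(y).
\]

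\textbf{Step 3 (absorb the symbol in $L^\infty$ and extract the norm).} Pulling out $\|h_1^{-1}h_2\psi\|_{L^\infty(Y,\nu)}^{p}$ from the last integral leaves $\|u\|_{L^p}^{p}$. Taking $p$-th roots and then rearranging via $q/p\geq 1$ yields
\[
\|T_\psi u\|_{L^q(X)}\leq C_1^{1/p'}C_2^{1/q}\bigl\|h_1^{-1}h_2\psi\bigr\|_{L^\infty(Y,\nu)}\,\|u\|_{L^p(Y)},
\]
which is the claimed bound since $1/p'=(p-1)/p$. The only delicate point in the whole argument is that the use of Minkowski in Step 2 requires $q/p\geq 1$, which is exactly the hypothesis $p\leq q$; everything else is bookkeeping on exponents chosen so that the product of the three factors $h_1,\,h_1^{-1}h_2,\,|\psi|$ collapses correctly.
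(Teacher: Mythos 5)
Your proof is correct and follows essentially the same route as the paper's: Hölder with exponents $p'$ and $p$ in the $y$-variable (consuming \eqref{1}), then Minkowski's integral inequality in $L^{q/p}(X)$ (consuming \eqref{2} and using $p\le q$), and finally absorbing $h_1^{-1}h_2\psi$ in $L^\infty$ to obtain the stated norm bound $C_1^{(p-1)/p}C_2^{1/q}\no{h_1^{-1}h_2\psi}_{L^\infty(Y,\nu)}$. No gaps; the exponent bookkeeping matches the paper's argument exactly.
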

 \begin{remark} We remark that Theorem~\ref{thm:Schur's test} generalises the Schur's test of Zhao in \cite[Theorem 1]{Zha15}, which is a special case of ours when $X=Y$, $\psi = 1$ and $h_1=h_2$.
 \end{remark}
 \begin{proof} 
 	
 	The proof of this theorem follows from a standard argument, as in \cite{CuMc06,Zha15}. 
 	Using  H{\"o}lder's inequality and  \eqref{1}, it follows
 	\begin{equation*}\begin{split}
 	\left|\left(T_{\psi}u\right)(x)\right|
 	=& \left|\intop_{Y}\left(\left|K\left(x,y\right)\right|^{\alpha}h_{1}(y)\right)\left(\left|K\left(x,y\right)\right|^{1-\alpha}h_{1}(y)^{-1}|u(y)||\psi(y)|\right)d\nu(y)\right|\\
 	\leq & \left(C_{1}g(x)^{p'}\right)^{\frac{1}{p'}} \left|\intop_{Y}\left|K\left(x,y\right)\right|^{\left(1-\alpha\right)p}h_{1}(y)^{-p}|u(y)|^p|\psi(y)|^pd\nu(y)\right|^{\frac{1}{p}}\\
 	\leq & C_{1}^{\frac{1}{p'}}\left(\intop_{Y}\left|K\left(x,y\right)\right|^{\left(1-\alpha\right)p}g(x)^ph_1(y)^{-p}|u(y)|^p|\psi(y)|^pd\nu(y)\right)^{\frac{1}{p}},
 	\end{split}\end{equation*}
 	and hence 
 	\begin{equation*}
 	\no{T_\psi u}^p_{L^q(X,\mu)}\le C_1^{p-1}\left(\intop_{X}\left(\intop_{Y}\left|K\left(x,y\right)\right|^{\left(1-\alpha\right)p}g(x)^ph_1(y)^{-p}|u(y)|^p|\psi(y)|^pd\nu(y)\right)^{\frac{q}{p}}d\mu(x)\right)^{\frac{p}{q}}.
 	\end{equation*}
 	Since $1<p\leq q<\infty$, we can use  Minkowski's integral inequality (see \cite[Theorem 2.4]{LiLo01} for example). From \eqref{2}, the estimate continues as 
 	\begin{equation*}\begin{split}
 	\no{T_\psi u}^p_{L^q(X,\mu)}\le& C_1^{p-1} \intop_{Y}\left(\intop_{X}\left|K\left(x,y\right)\right|^{\left(1-\alpha\right)q}g(x)^q h_1(y)^{-q}|u(y)|^q|\psi(y)|^qd\mu(x)\right)^{\frac{p}{q}}d\nu(y)\\
 	\leq & C_{1}^{p-1}\intop_{Y}\left(C_{2}h_{2}(y)^q\right)^{\frac{p}{q}} h_1(y)^{-p}|\psi(y)|^p|u(y)|^pd\nu(y)\\
 	\leq & C_{1}^{p-1}C_{2}^{\frac{p}{q}}\intop_{Y}h_{1}(y)^{-p}h_2(y)^p|\psi(y)|^p|u(y)|^pd\nu(y)\\
 	\leq&  C_{1}^{p-1}C_{2}^{\frac{p}{q}}\no{h_1^{-1}h_2\psi}^p_{L^\infty(Y,\nu)}\no{u}^p_{L^p(Y,\nu)}.
 	\end{split}\end{equation*}
 	This proves Theorem~\ref{thm:Schur's test}.
 \end{proof}

 \vspace{10pt}
 

\vspace{30pt}

\end{document}